 \newtheorem{theorem}{Theorem}[section]
 \newtheorem{thm}[theorem]{Theorem}
 \newtheorem{lem}[theorem]{Lemma}
 \newtheorem{cor}[theorem]{Corollary}
 \newtheorem{prop}[theorem]{Proposition}
\DeclareMathOperator\ID{\mathbf{1}}
\DeclareMathOperator\id{id}
\DeclareMathOperator\proj{proj}
\DeclareMathOperator\alt{Alt}
\DeclareMathOperator\sym{Sym}
\DeclareMathOperator\PGL{PGL}
\DeclareMathOperator\irr{Irr}
\DeclareMathOperator\Spec{Spec}
\DeclareMathOperator\dd{\mathcal{D}}
\begin{document}
\title{The Erd\H{o}s-Ko-Rado property for some permutation groups}

 \author{Bahman Ahmadi}
 \ead{ahmadi2b@uregina.ca}

 \author{Karen Meagher \corref{cor1}\fnref{fn1}}
 \ead{karen.meagher@uregina.ca}
 \cortext[cor1]{Corresponding author}
 \fntext[fn1]{Research supported by NSERC.}

 \address{Department of Mathematics and Statistics,\\
 University of Regina, 3737 Wascana Parkway, S4S 0A4 Regina SK, Canada}

 \begin{abstract}
   A subset in a group $G \leq \sym(n)$ is \textsl{intersecting} if
   for any pair of permutations $\pi,\sigma$ in the subset there is an $i \in
   \{1,2,\dots,n\}$ such that $\pi(i) = \sigma(i)$. If the stabilizer
   of a point is the largest intersecting set in a group, we say that
   the group has the \textsl{Erd\H{o}s-Ko-Rado (EKR)
     property}. Moreover, the group has the \textsl{strict EKR}
   property if every intersecting set of maximum size in the group is
   either the stabilizer of a point or the coset of the stabilizer of
   a point.  In this paper we look at several families of permutation
   groups and determine if the groups have either the EKR property or
   the strict EKR property. First, we prove that all cyclic
   groups have the strict EKR property. Next we show that  all dihedral and Frobenius
   groups have the EKR property and we characterize which ones have
   the strict EKR property. Further, we show that if all the groups in an
   external direct sum or an internal direct sum have the EKR (or
   strict EKR) property, then the product does as well. Finally, we
   show that the wreath product of two groups with EKR property also
   has the EKR property.
 \end{abstract}

 \begin{keyword}
 permutation group, independent sets, Erd\H{o}s-Ko-Rado property
 \end{keyword}

\maketitle

\section{Introduction}\label{introduction}

The Erd\H{o}s-Ko-Rado (EKR) theorem~\cite{TheOriginalEKR} describes the
largest collection of subsets of size $k$ from the set
$\{1,2,\dots,n\}$ such that any two of the subsets contain a common
element. If $n \geq 2k$ then the largest collection has size
$\binom{n-1}{k-1}$, moreover, if $n>2k$, the only collections of
this size are the collections of all subsets that contain a fixed
element from $\{1,2,\dots,n\}$. This result can be generalized to many
objects other than sets, such as integer sequences~\cite{MR1722210},
vector spaces over a finite field~\cite{MR0382015}, matchings and
partitions~\cite{MR2156694}. In this paper we will consider versions
of the EKR theorem for permutation groups.

Let $\sym(n)$ denote the symmetric group and $G\leq \sym(n)$ be a
permutation group with the natural action on the set
$\{1,2,\dots,n\}$.  Two permutations $\pi,\sigma\in G$ are said to
\textsl{intersect} if $\pi\sigma^{-1}$ has a fixed point in
$\{1,2,\dots,n\}$.  If $\pi$ and $\sigma$ intersect, then $\pi(i) =
\sigma(i)$ for some $i \in [n] = \{1,2,\dots,n\}$ and we say that
$\pi$ and $\sigma$ \textsl{agree} at $i$. Further, if $\sigma$ and
$\pi$ do not intersect, then $\pi\sigma^{-1}$ is a derangement. A
subset $S\subseteq G$ is called \textsl{intersecting} if any pair of
its elements intersect. Clearly, the stabilizer of a point is an
intersecting set in $G$, as is any coset of the stabilizer of a point.

We say the group $G$ has the \textsl{EKR property} if the size of any
intersecting subset of $G$ is bounded above by the size of the largest
point-stabilizer in $G$. Further, $G$ is said to have the
\textsl{strict EKR property} if the only intersecting subsets of maximum size
in $G$ are the cosets of the point-stabilizers. It is clear from the
definition that if a group has the strict EKR property then it will
have the EKR property.

In 1977 Frankl and Deza \cite{Frankl1977352} proved that $\sym(n)$ has
the EKR property and conjectured that it had the strict EKR
property. This conjecture caught the attention of several researchers,
indeed, it was proved using vastly different methods in each of 
\cite{MR2009400, Karen,MR2061391} and \cite{MR2419214}. We state this
result using our terminology.

\begin{thm}\label{EKR_for_sym}
For all integers $n\geq 2$, the group $\sym(n)$ has the strict EKR property. \qed
\end{thm}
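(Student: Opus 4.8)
The plan is to translate the problem into spectral graph theory. Define the \emph{derangement graph} $\Gamma_n$ on the vertex set $\sym(n)$, joining $\pi$ and $\sigma$ whenever $\pi\sigma^{-1}$ has no fixed point, that is, whenever $\pi$ and $\sigma$ fail to intersect. This is a Cayley graph on $\sym(n)$ whose connection set $D$ is the set of all derangements; since $D$ is closed under conjugation, $\Gamma_n$ is a normal Cayley graph. An intersecting set is precisely an independent set (coclique) in $\Gamma_n$, so the EKR property amounts to showing that the independence number of $\Gamma_n$ equals $(n-1)!$, the size of a point stabilizer, and the strict EKR property amounts to identifying the maximum cocliques.

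First I would compute the spectrum of $\Gamma_n$. Because the connection set is a union of conjugacy classes, the eigenvalues are indexed by the irreducible characters $\chi \in \irr(\sym(n))$, the eigenvalue attached to $\chi$ being $\frac{1}{\chi(\id)}\sum_{\sigma \in D}\chi(\sigma)$, with multiplicity $\chi(\id)^2$. The trivial character yields the valency $d = |D|$, the number of derangements. I would then apply the ratio (Hoffman) bound: if $\tau$ is the least eigenvalue of a $d$-regular graph on $N$ vertices, then every coclique has size at most $N/(1 - d/\tau)$, where here $N = n!$.

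The technical heart is to show that the least eigenvalue is $\tau = -d/(n-1)$, attained exactly on the irreducible character indexed by the partition $[n-1,1]$ (the standard representation). Granting this, the ratio bound gives coclique size at most $n!/(1 + (n-1)) = (n-1)!$, which establishes the EKR property; since a point stabilizer achieves this, the bound is tight. The main obstacle is this eigenvalue estimate: one must bound $\frac{1}{\chi_\lambda(\id)}\sum_{\sigma \in D}\chi_\lambda(\sigma)$ for every $\lambda \neq [n]$, which can be carried out using the recursion for derangement numbers together with the branching rule for the characters of $\sym(n)$, or by invoking the known fact that $[n-1,1]$ furnishes the minimum.

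For the strict EKR property I would use the equality case of the ratio bound. When the bound is met, the characteristic vector $\mathbf{1}_S$ of a maximum coclique $S$, after subtracting its average, lies in the eigenspace for $\tau$; since this eigenspace is exactly the $[n-1,1]$-isotypic component, $\mathbf{1}_S$ lies in the direct sum of the trivial and standard modules. This space is spanned by the characteristic vectors of the cosets $S_{i,j} = \{\pi \in \sym(n) : \pi(i) = j\}$ of point stabilizers. The final step is to argue that the only $0/1$ vectors of weight $(n-1)!$ in this low-dimensional space are precisely the $\mathbf{1}_{S_{i,j}}$; forcing a $0/1$ vector in this module to be a single coset indicator is where the real work lies, and is the part I expect to be most delicate.
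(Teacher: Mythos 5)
You should know at the outset that the paper does not prove this theorem at all: it is stated with a \qed and attributed to four independent published proofs (Cameron--Ku, Godsil--Meagher, Larose--Malvenuto, Wang--Zhang), so the fair comparison is to those cited arguments. Your plan follows the algebraic route (the Ku--Wales eigenvalue computation plus the Hoffman ratio bound, then the ``module method'' of Godsil--Meagher for strictness), which is indeed one of the known successful strategies. But as a proof it has genuine gaps, and you have flagged them yourself: both the least-eigenvalue claim and the final Boolean-vector step are deferred, and these two steps \emph{are} the theorem. Showing that the minimum of $\frac{1}{\chi_\lambda(\id)}\sum_{\sigma\in D}\chi_\lambda(\sigma)$ over all $\lambda$ is $-d/(n-1)$ is the hard analytic content of Ku--Wales (also Renteln), and ``invoking the known fact'' is a citation, not a proof; likewise, showing that the only $0/1$ vectors of weight $(n-1)!$ in the span of the coset indicators $\mathbf{1}_{S_{i,j}}$ are the $\mathbf{1}_{S_{i,j}}$ themselves is a substantial theorem in its own right (it occupies the bulk of the Godsil--Meagher paper), not a routine verification. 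Note also that for the non-strict EKR part your spectral machinery is unnecessary: the cyclic group generated by an $n$-cycle is a clique of size $n$ in the derangement graph, and the clique-coclique bound (Lemma~\ref{clique_coclique_bound} of this paper) immediately gives $\alpha(\Gamma_{\sym(n)})\leq n!/n=(n-1)!$, exactly as the paper does for any group containing an $n$-cycle.

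There is also one outright error in your sketch. You assert that the $\tau$-eigenspace ``is exactly the $[n-1,1]$-isotypic component,'' but this fails for $n=4$: there $d=9$ and $\tau=-3$, and the sign character also attains $-3$ (the nine derangements of $\sym(4)$ split into three even double transpositions and six odd $4$-cycles, giving $\frac{1}{1}(3-6)=-3$), so the least eigenspace has dimension $10$, containing the sign module as well. Consequently the equality case of the ratio bound only places $\mathbf{1}_S$ in the trivial-plus-sign-plus-standard module for $n=4$, and your argument as written does not close; the published proofs either establish uniqueness of the minimizing character for $n\geq 5$ and treat small $n$ separately, or reach the trivial-plus-standard module by a different device (Godsil--Meagher exploit equality in the clique-coclique bound rather than the bare eigenspace). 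So: right family of ideas, but the proposal as it stands is an outline of the cited proofs with their two central theorems left unproved and one false intermediate claim.
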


Researchers have also worked on finding other subgroups of $\sym(n)$
that have the strict EKR property. For example in~\cite{KuW07} it is
shown that $\alt(n)$ has the strict EKR property, and it was also
determined exactly which Young subgroups have the strict EKR
property. In~\cite{MR2419214} it is shown that some Coxeter groups
have the EKR property. Further, the projective general linear groups
$\PGL(2,q)$ have the strict EKR property~\cite{MeagherS11}, while the
groups $\PGL(3,q)$ do not~\cite{MeagherS13}.

We point out that the group action is essential for the concepts of
the EKR and the strict EKR properties. A group can have the (strict) EKR
property under one action while it fails to have this property under
another action (in Section~\ref{non-examples} we give an example of
such a group). This is the reason that in this paper, we always
consider the ``permutation groups'' (i.e. the subgroups of $\sym(n)$
with their natural action on $\{1,\dots,n\}$) rather than general
groups. This is in contrast to~\cite{bardestani} where they define a
group to have the EKR property if the group has the EKR property for
every group action.

In this paper we prove that any group with a sharply transitive set
has the EKR property and that all cyclic groups have the strict EKR
property. We also show that all dihedral groups have the strict EKR
property, all Frobenius groups have the EKR property and we
characterize which Frobenius groups have the strict EKR property. We
show that if all the groups in either an external or an internal
direct product have the EKR (or strict EKR) property, then the product
does as well; further, we show that if the external product of a set
of groups has the EKR (or strict EKR) property then each of the group
in the product does as well. The wreath product of two groups with EKR
property also has the EKR property.  We conclude with a section about
some groups that do not have the EKR property.

\section{The EKR property for cyclic groups}

A common technique in proving EKR theorems is to define a graph that
has the property that the independent sets in the graph are exactly
the intersecting sets. This transforms the problem to determining the
size and the structure of the independent sets in this graph. For the
EKR theorem on sets, the graph used is the Kneser graph
(see~\cite[Section 7.8]{MR1829620} for more details). For a group $G$
we define the \textsl{derangement graph}; this graph has the elements
of $G$ as its vertices and two vertices are adjacent if and only if
they do not intersect. We will denote the derangement graph of a group
by $\Gamma_G$.  In particular, $\sigma, \pi \in G$ are adjacent in
$\Gamma_G$ if and only if $\sigma \pi^{-1}$ is a derangement. The
derangement graph is the Cayley graph on $G$ where the set of all
derangements of $G$ is the connection set for the graph. Since the set
of derangements is a union of conjugacy class $\Gamma_G$ is a
\textsl{normal} Cayley graph and it is vertex transitive (the group
$G$ is a transitive subgroup of the automorphism group for
$\Gamma_G$).

The group $G$ has the EKR property if and only if the size of a
maximum independent set in $\Gamma_G$ is equal to the size of the
largest stabilizer of a point in $G$. The group $G$ has the strict EKR
property if and only if the cosets of the largest stabilizer of a
point are all the only independent sets of maximum size.

Following the standard notation, we will denote the size of the
largest clique in a graph $X$ by $\omega(X)$, and $\alpha(X)$ will
represent the size of the largest independent set in $X$. The next
result is known as the \textsl{clique-coclique bound} and we refer the
reader to \cite{MR2009400} for a proof.

\begin{lem}\label{clique_coclique_bound}
  IF $X$ is a vertex-transitive graph, then $\omega(X) \alpha(X) \leq
  |V(X)|$.  Moreover, if equality holds, then every maximum
  independent set and every maximum clique intersect.\qed
\end{lem}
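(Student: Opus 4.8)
The plan is to exploit the vertex-transitivity through a double-counting argument. Let $G \le \mathrm{Aut}(X)$ act transitively on $V(X)$, so that $|G| = |V(X)|\cdot|G_v|$ for the stabilizer $G_v$ of any vertex $v$; in particular the number of elements of $G$ carrying a fixed vertex $u$ to a fixed vertex $w$ equals $|G|/|V(X)|$, independent of the choice of $u$ and $w$, since the set of such elements is a coset of $G_u$ and is nonempty by transitivity. Fix a maximum clique $C$ with $|C| = \omega(X)$ and a maximum independent set $I$ with $|I| = \alpha(X)$. The first fact I would record is elementary: any clique and any independent set share at most one vertex, since two common vertices would be simultaneously adjacent (as members of the clique) and non-adjacent (as members of the independent set). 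As each $g \in G$ carries $C$ to another clique $gC$, this gives $|gC \cap I| \le 1$ for every $g \in G$.

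Next I would count the set $T$ of triples $(g,u,v)$ with $g \in G$, $u \in C$, $v \in I$, and $gu = v$, in two ways. Summing over pairs $(u,v) \in C \times I$ first, each pair is realized by exactly $|G|/|V(X)|$ elements $g$, so
\[
  |T| = \omega(X)\,\alpha(X)\,\frac{|G|}{|V(X)|}.
\]
Summing over $g$ instead, the vertex $u$ is determined by $v$ through $u = g^{-1}v$, so the triples with a given $g$ are in bijection with $gC \cap I$; hence $|T| = \sum_{g \in G} |gC \cap I| \le |G|$ by the previous paragraph. Combining the two expressions and cancelling $|G|/|V(X)|$ yields $\omega(X)\,\alpha(X) \le |V(X)|$.

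For the equality statement, suppose $\omega(X)\,\alpha(X) = |V(X)|$ and let $C$ and $I$ now be an \emph{arbitrary} maximum clique and maximum independent set. The same double count gives $\sum_{g \in G} |gC \cap I| = |G|$, a sum of $|G|$ terms each at most $1$; therefore $|gC \cap I| = 1$ for every $g \in G$, and in particular for $g = \id$ we get $|C \cap I| = 1$. Thus every maximum clique meets every maximum independent set, as claimed.

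I do not expect a genuine obstacle: the argument is self-contained once transitivity is invoked. The only point that needs care is the bookkeeping in the double count, specifically the claim that each pair $(u,v)$ is hit by the same number $|G|/|V(X)|$ of group elements. This is exactly where vertex-transitivity enters (through the constancy of the stabilizer sizes), and without it the clean cancellation that produces the bound would fail.
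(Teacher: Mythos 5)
Your proof is correct: the coset/orbit--stabilizer count showing each pair $(u,v)$ is hit by exactly $|G|/|V(X)|$ automorphisms, the observation that $|gC\cap I|\le 1$, and the forced equality $|gC\cap I|=1$ (at $g=\id$) in the equality case are all sound. The paper itself states this lemma without proof, deferring to the cited reference of Cameron and Ku, and your double-counting argument is precisely the standard proof found there, so there is nothing to compare beyond noting the match.
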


For any permutation group $G \leq \sym(n)$, a clique in $\Gamma_G$ is
a set of permutations in which no two permutations agree on a
point. Thus a sharply-transitive subset of $G$ is a clique of size $n$ in $\Gamma_G$.

\begin{cor}
If a group has a sharply-transitive set, then the group has the EKR property.
\end{cor}
\begin{proof}
  Assume that a group $G \leq \sym(n)$ has a sharply-transitive
  set. Then $G$ is a transitive group and the size of the largest
  stabilizer of a point is $\frac{|V(G)|}{n}$. The sharply transitive
  subset of $G$ is a clique of size $n$ in $\Gamma_G$. So by the
  clique-coclique bound $\alpha(\Gamma_G) \leq \frac{|G|}{n}$.
\end{proof}

A simple case of a sharply-transitive set is the subgroup generated
by an $n$-cycle.

\begin{cor}
  Any permutation group of degree $n$ that contains an $n$-cycle
  has the EKR property.\qed
\end{cor}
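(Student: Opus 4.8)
The plan is straightforward: I would derive this corollary directly from the preceding corollary by exhibiting the $n$-cycle as the generator of a sharply-transitive set. If $G \leq \sym(n)$ contains an $n$-cycle, call it $c$, then the cyclic subgroup $\langle c \rangle$ generated by $c$ has order $n$, since an $n$-cycle has order exactly $n$. The key observation is that $\langle c \rangle = \{\id, c, c^2, \dots, c^{n-1}\}$ is itself a sharply-transitive set on $\{1,2,\dots,n\}$.

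\smallskip

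First I would verify the transitivity: for any two points $i,j \in \{1,\dots,n\}$, some power of $c$ maps $i$ to $j$, because the single cycle $c$ permutes all $n$ points in one orbit. Second, I would verify sharpness, i.e. that no nontrivial power of $c$ fixes any point. This is where the cyclic structure does the work: since $c$ is a single $n$-cycle, the smallest positive power $c^k$ that fixes any given point must satisfy $k \equiv 0 \pmod{n}$, so the only element of $\langle c \rangle$ with a fixed point is the identity. Equivalently, $\langle c \rangle$ acts regularly (transitively and freely) on $\{1,\dots,n\}$, and a regular action is precisely a sharply-transitive set of size $n$.

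\smallskip

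With the sharply-transitive set $\langle c \rangle$ in hand, I would simply invoke the previous corollary, which asserts that any group possessing a sharply-transitive set has the EKR property. Since $\langle c \rangle \subseteq G$ is a sharply-transitive set inside $G$, the hypothesis of that corollary is met and the conclusion follows immediately.

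\smallskip

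I do not expect any genuine obstacle here; the statement is an immediate specialization, and the only point requiring a moment's care is the freeness of the action, which follows cleanly from the fact that an $n$-cycle has full order $n$ with no proper power fixing a point. This is why the statement is marked with \qed in the excerpt and presented without a written proof.
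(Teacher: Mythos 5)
Your proposal is correct and matches the paper's intended argument exactly: the paper introduces this corollary with the remark that the subgroup generated by an $n$-cycle is ``a simple case of a sharply-transitive set,'' and the result then follows from the preceding corollary, just as you argue. Your verification that $\langle c\rangle$ acts regularly (no nontrivial power of an $n$-cycle fixes a point) is the only detail to check, and you handle it correctly.
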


If $G$ is a sharply-transitive group, then $\Gamma_G$ is a complete
graph; in this case $G$ trivially has the strict EKR theorem. For
example, any subgroup of $\sym(n)$ that is generated by an $n$-cycle
has the strict EKR property. Next we show that every cyclic group has
the strict EKR property.

\begin{thm}\label{cyclic_groups}
  For any permutation $\sigma\in \sym(n)$, the cyclic group $G$
  generated by $\sigma$ has the strict EKR property.
\end{thm}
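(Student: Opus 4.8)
The plan is to exploit the circulant structure of the derangement graph $\Gamma_G$. Write $m = |G|$ for the order of $\sigma$ and identify $G$ with the additive group $\zz_m = \zz/m\zz$, so that $\sigma^j$ corresponds to $j$; note that each cycle length $\ell_i$ of $\sigma$ divides $m$. Since $\sigma^a$ and $\sigma^b$ agree at a point lying in a cycle of length $\ell_i$ exactly when $\ell_i \mid (a-b)$, the element $\sigma^{a-b}$ is a derangement precisely when $a-b$ is divisible by none of the cycle lengths $\ell_1,\dots,\ell_k$. Thus $\Gamma_G$ is the circulant graph on $\zz_m$ with connection set $D = \{\, j : \ell_i \nmid j \text{ for all } i\,\}$. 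Let $\ell = \min_i \ell_i$ be the smallest cycle length. Every point of $\sigma$ lying in a cycle of length $\ell$ has the same stabilizer, namely the subgroup $\ell\zz_m$ of multiples of $\ell$, which has order $m/\ell$ since $\ell \mid m$; this is the largest point-stabilizer, and it is an independent set of $\Gamma_G$, as are all of its cosets $c + \ell\zz_m$.

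Next I would exhibit a clique meeting the clique-coclique bound. Consider $C = \{0,1,\dots,\ell-1\} \subseteq \zz_m$. For distinct $a,b \in C$, set $d = a-b$; replacing $\sigma^d$ by its inverse if necessary (which does not affect being a derangement) we may assume $1 \le d \le \ell-1$. Since every cycle length satisfies $\ell_i \ge \ell > d$, no $\ell_i$ divides $d$, so $\sigma^{a-b}$ is a derangement and $a,b$ are adjacent. Hence $C$ is a clique of size $\ell$. Applying Lemma~\ref{clique_coclique_bound} to the vertex-transitive graph $\Gamma_G$ gives $\omega(\Gamma_G)\,\alpha(\Gamma_G) \le m$; combined with $\omega(\Gamma_G) \ge |C| = \ell$ and $\alpha(\Gamma_G) \ge m/\ell$, this forces $\omega(\Gamma_G) = \ell$ and $\alpha(\Gamma_G) = m/\ell$. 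This already establishes the EKR property, shows that $C$ is a maximum clique, and crucially places us in the equality case of the bound, so that every maximum independent set meets every maximum clique.

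For the strict statement, let $S$ be an arbitrary maximum independent set, so $|S| = m/\ell$. Because $\Gamma_G$ is a Cayley graph on $\zz_m$, each translate $t + C = \{t, t+1, \dots, t+\ell-1\}$ is again a maximum clique, and therefore $S$ must meet every block of $\ell$ consecutive residues modulo $m$. Listing the elements of $S$ in cyclic order and writing $g_1, \dots, g_{m/\ell}$ for the cyclic gaps between consecutive elements, this condition says that no gap skips $\ell$ consecutive residues, i.e.\ $g_i \le \ell$ for every $i$. Since the gaps sum to $m$ and there are exactly $m/\ell$ of them, every gap must equal $\ell$; hence the elements of $S$ are equally spaced and $S = c + \ell\zz_m$ for some $c$, which is a coset of the largest point-stabilizer. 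I expect the verification that $C$ is a clique and, above all, this final gap-counting argument to carry the essential content, with the remainder being routine bookkeeping about the circulant structure.
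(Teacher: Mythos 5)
Your proposal is correct and follows essentially the same route as the paper: you use the same clique $\{\sigma^0,\sigma^1,\dots,\sigma^{r_1-1}\}$ of size equal to the shortest cycle length, the same clique--coclique bound with equality against the stabilizer of a point in a shortest cycle, and the same family of translated cliques that every maximum independent set must meet. The only difference is cosmetic: where the paper pins down $S$ inductively by sliding between the overlapping cliques $C_{tr_1}$ and $C_{tr_1+1}$, you conclude globally by noting that the $m/\ell$ cyclic gaps are each at most $\ell$ and sum to $m$, hence all equal $\ell$ --- an equivalent (and arguably cleaner) way to force $S$ to be a coset of $\ell\zz_m$.
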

\begin{proof}
  Let $\sigma=\sigma_1\sigma_2\cdots\sigma_k$, where $\sigma_i$ are
  disjoint cyclic permutations. Assume that $\sigma_i$ has order
  $r_i$, and that $1\leq r_1\leq\cdots\leq r_k$. Note that 
\[
C=\{\sigma^0, \sigma^1, \sigma^2,\ldots,\sigma^{r_1 - 1}\}
\]
induces a clique of size $r_1$ in the graph $\Gamma_G$. Then by
Lemma~\ref{clique_coclique_bound}
\[
\alpha(\Gamma_G)\leq \frac{|G|}{r_1}.
\]
Note, in addition, that if $\sigma_1=(a_1,\ldots,a_{r})$, then the
stabilizer of $a_1$ in $G$ is
\[
G_{a_1}=\{\sigma^{r_1}, \sigma^{2r_1},\ldots, \sigma^{|G|} \};
\]
therefore
\[
\alpha(\Gamma_G)=|G_{a_1}|=\frac{|G|}{r_1}.
\]
It is clear that this is the largest size of a point-stabilizer in
$G$. This proves that $G$ has the EKR property.

To show $G$ has the strict EKR property, note that the clique $C$ and
the independent set $G_{a_1}$ together show that the clique-coclique
bound (Lemma~\ref{clique_coclique_bound}) for $\Gamma_G$ holds with
equality. Hence any maximum independent set must intersect with any
maximum clique in $\Gamma_G$. Let $S$ be any maximum independent set
and without loss of generality assume the identity element, $\id$, is
in $S$. We show that $S$ is the stabilizer of a point.

For any $i\geq 0$, set
\[
C_i=\{\sigma^{i},\sigma^{i+1},\dots,\sigma^{i+r_1-1}\}.
\]
Note that $C_i$ are cliques in $\Gamma_G$ of maximum size and that
$C_0=C$. Furthermore, for any $0\leq t\leq |G|/r_1$, we have
$C_{tr_1}\backslash C_{tr_1+1}=\{\sigma^{tr_1}\}$ and
$C_{tr_1+1}\backslash C_{tr_1}=\{\sigma^{(t+1)r_1}\}$. Since for any
$0\leq t\leq |G|/r_1$, the independent set $S$ intersects with each of
the cliques $C_{tr_1+1} $ and $ C_{tr_1}$ in exactly one point, and
since $\sigma^0\in S$, we conclude that $C_{1} \cap S = \{
\sigma^{r_1}\}$.  Continue like this for all $t \in \{0, \dots,
|g|/r_1\}$ shows that $\sigma^0, \sigma^{r_1}, \ldots,
\sigma^{|G|-r_1}\in S$; that is, $S=G_{a_1}$.
\end{proof}

If $G$ is a cyclic group generated by $\sigma\in \sym(n)$, and $r_1$
is the length of the shortest cycle in $\sigma$, then the largest
intersecting set is actually an $r_1$-intersecting set; meaning that
any two permutations in the set agree on $r_1$ elements of $\{1,2,\dots,n\}$. This
is quite different from the case for subsets; in fact, it is not hard
to show that any maximal collection of intersecting subsets is not
$2$-intersecting.

\section{The EKR property for dihedral and Frobenius groups}

Recall that for any $n\geq 3$, the \textsl{dihedral group} of degree
$n$, denoted by $D_n\leq \sym(n)$, is the group of symmetries of a regular
$n$-gon, including both rotations and reflections.

\begin{prop}\label{dihedral}
All the dihedral groups have the strict EKR property.
\end{prop}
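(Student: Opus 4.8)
The plan is to exploit the fact that $D_n$ is a transitive permutation group of degree $n$ with exactly $2n$ elements, so that every point-stabilizer has order $|D_n|/n = 2$; this smallness of the stabilizers makes the strict property almost automatic once the ordinary EKR property is in hand.

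First I would observe that the rotation subgroup of $D_n$ is cyclic, generated by an $n$-cycle, and acts regularly on the $n$ vertices of the polygon. Hence the $n$ rotations form a sharply-transitive set, which is precisely a clique of size $n$ in the derangement graph $\Gamma_{D_n}$: any two distinct rotations differ by a nontrivial rotation, and since the rotation action is free, every nontrivial rotation is fixed-point-free and therefore a derangement. By the corollary stating that a group with a sharply-transitive set has the EKR property, $D_n$ has the EKR property, and the clique-coclique bound (Lemma~\ref{clique_coclique_bound}) gives $\alpha(\Gamma_{D_n}) \leq |D_n|/n = 2$. Since any point-stabilizer is an intersecting set of size $2$, we conclude $\alpha(\Gamma_{D_n}) = 2$, and the clique of rotations together with a point-stabilizer meet the clique-coclique bound with equality.

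For the strict property I would argue that, because the maximum intersecting sets have size exactly $2$, there is no room for an exotic extremal configuration. Let $S = \{\pi,\sigma\}$ be any intersecting set of maximum size. By definition $\pi$ and $\sigma$ agree at some point $i$, say $\pi(i) = \sigma(i) = j$. Writing $G_i$ for the stabilizer of $i$ and $S_{i,j} = \{g \in D_n : g(i) = j\}$, we have $\{\pi,\sigma\} \subseteq S_{i,j} = \pi G_i$, a left coset of the point-stabilizer $G_i$. Since $|G_i| = 2 = |S|$, this containment is an equality, so $S$ is exactly a coset of a point-stabilizer, establishing the strict EKR property.

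The only substantive point needing care is verifying that every point-stabilizer has order $2$ uniformly, independent of the parity of $n$; this follows from transitivity of the rotation action together with $|D_n| = 2n$, so the even/odd distinction in which reflections fix vertices never actually matters. I do not expect any genuine obstacle beyond this bookkeeping, as the whole argument collapses to the observation that order-two stabilizers force every intersecting pair to be a coset.
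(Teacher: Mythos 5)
Your proof is correct, but it takes a genuinely different route from the paper's. The paper argues directly inside $D_n$: after normalizing so that $\id$ lies in a maximum independent set $S$, it observes that no nontrivial rotation $\sigma^i$ intersects $\id$, and that no two reflections $\sigma^i\pi,\sigma^j\pi$ can coexist in $S$ because $\sigma^i\pi(\sigma^j\pi)^{-1}=\sigma^{i-j}$ is a derangement; hence $S=\{\id,\sigma^i\pi\}$, which is the stabilizer of any point fixed by $\sigma^i\pi$. You instead obtain the bound $\alpha(\Gamma_{D_n})\leq 2$ from the clique-coclique bound applied to the clique of the $n$ rotations (equivalently, from the paper's corollary that a group containing an $n$-cycle has the EKR property), and then get strictness from a purely general observation: when every point-stabilizer has order $2$, any intersecting pair $\{\pi,\sigma\}$ with $\pi(i)=\sigma(i)=j$ is contained in, and hence by cardinality equal to, the two-element coset $S_{i,j}=\pi G_i$. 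Both arguments are sound; the paper's is self-contained (no appeal to clique-coclique) and exhibits the extremal sets explicitly as $\{\id,\sigma^i\pi\}$, while yours is shorter, avoids any case analysis on which reflections have fixed points (the even/odd parity issue you flag is indeed immaterial, since orbit-stabilizer gives $|G_i|=2$ for the vertex action regardless of parity), and isolates a reusable principle: any transitive group with a sharply transitive subset whose point-stabilizers have order $2$ has the strict EKR property. Notably, this is the same trivial-pair principle the paper itself invokes in Theorem~\ref{EKR_for_frobenius} for Frobenius groups with $|H|=2$; your argument in effect extends that remark to all $n$, including even $n$, where $D_n$ is not Frobenius.
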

\begin{proof}
  Assume $D_n$ is generated by the permutations $\sigma$, the
  rotation, and $\pi$, the reflection through the antipodal points of
  the $n$-gon. Then $\sigma$ is of order $n$, and $\pi$ is of order
  2. Since $\{\id,\pi\}$ is an intersecting set, we have
  $\alpha(\Gamma_{D_n})\geq 2$. To prove the proposition, we show that
  any maximum independent set in $\Gamma_{D_n}$ is a coset of a
  point-stabilizer. Assume $S$ is a maximum independent set in
  $\Gamma_{D_n}$ and, without loss of generality, assume $\id\in
  S$. Clearly, $\sigma^i\notin S$, for any $1\leq i<n$. If
  $\sigma^i\pi, \sigma^j\pi\in S$, for some $1\leq j<i<n$, then their
  division,
\[
\sigma^i\pi(\sigma^j\pi)^{-1}=\sigma^{i-j}
\]
must have a fixed-point, which is a contradiction. Therefore,
$\alpha(\Gamma_{D_n})=2$ and $S=\{e,\sigma^i\pi\}$, for some $1\leq
i<n$. Note, finally, that since no pair $\sigma^i\pi, \sigma^j\pi$
have any common fixed-point, $S$ is indeed the stabilizer of any of
the points fixed by $\sigma^i\pi$. 
\end{proof}

Note that the dihedral group $D_n$ can be written as $D_n=\mathbb{Z}_n
\mathbb{Z}_2$, where $\mathbb{Z}_n\triangleleft D_n$ corresponds to
the subgroup generated by the rotations and $\mathbb{Z}_2$ is the
subgroup generated by the two reflections. More precisely,
$D_n=\mathbb{Z}_n \rtimes\mathbb{Z}_2$ when the non-identity element
of $\mathbb{Z}_2$ acts on $\mathbb{Z}_n$ by inversion. When $n$ is
odd, this is a particular case of a Frobenius group. A transitive
permutation group $G\leq \sym(n)$ is called a \textsl{Frobenius group}
if no non-trivial element fixes more than one point and some
non-trivial element fixes a point.  An alternative definition is as
follows: a group $G\leq \sym(n)$ is a Frobenius group if it has a
non-trivial proper subgroup $H$ with the condition that $H\cap
H^g=\{\id\}$, for all $g\in G\backslash H$, where $H^g=g^{-1}Hg$. This
subgroup is called a \textsl{Frobenius complement}.  Define the
\textsl{Frobenius kernel} $K$ of $G$ to be
\[
K=\left(G\backslash \bigcup_{g\in G} H^g\right)\cup\{\id\}.
\]
In fact, the non-identity elements of $K$ are all the derangement
elements of $G$. Note that if $G=KH\leq \sym(n)$ is a Frobenius group with kernel
$K$, then $|K|=n$ and $|H|$ must divide $n-1$; see \cite[Section
3.4]{dixon2012permutation} for proofs. This implies that the Frobenius
groups are relatively small transitive subgroups of $\sym(n)$. We also
observe the following.

\begin{lem}\label{size_of_G_x_frobenius}
  If $G=KH\leq \sym(n)$ is a Frobenius group with kernel $K$, then
  $|G_x|=|H|$, for any $x\in\{1,2,\dots,n\}$.
\end{lem}
\begin{proof}
Let $x\in\{1,2,\dots,n\}$. By the orbit-stabilizer theorem we have
\[
|x^G|=[G:G_x]=\frac{|G|}{|G_x|}=\frac{|K||H|}{|G_x|}=\frac{n|H|}{|G_x|},
\]
where $x^G$ is the orbit of $x$ under the action of $G$ on
$\{1,2,\dots,n\}$. Since this action is transitive, $|x^G|=n$.
\end{proof}

In order to find the maximum intersecting subsets of a Frobenius
group, we first describe their derangement graphs. Diaconis and
Shahshahani \cite{MR626813} show how to calculate the eigenvalues of
normal Cayley graphs using the irreducible representations of the
group (we refer the reader to \cite[Chapter 4]{BahmanAhmadi} for a
detailed proof). We will simply state the formula for the derangement graphs.

\begin{thm}\label{Diaconis} 
Let $G$ be a group. The eigenvalues of the graph $\Gamma_G$ are given by
\[
\eta_{\chi}=\frac{1}{\chi(\id)}\sum_{x\in \dd_G}\chi(x),
\]
where $\chi$ ranges over all irreducible characters of $G$. Moreover,
the multiplicity of $\eta_{\chi}$ is $\chi(\id)^2$.
\end{thm}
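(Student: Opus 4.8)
The plan is to recognize $\Gamma_G$ as a \emph{normal} Cayley graph and then to diagonalize its adjacency operator through the decomposition of the regular representation of $G$. First I would record the structural observation that the connection set of $\Gamma_G$ is the derangement set $\dd_G$, and that $\dd_G$ is a union of conjugacy classes of $G$: whether a permutation fixes a point is a conjugation-invariant property, so $g^{-1}xg$ is a derangement exactly when $x$ is, and $\dd_G$ is also closed under inversion. Thus $\Gamma_G = \mathrm{Cay}(G,\dd_G)$ with a connection set that is both symmetric and closed under conjugation.

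Next I would pass to the group algebra $\mathbb{C}G$ and view the adjacency matrix $A$ as the operator of multiplication by the element $z=\sum_{x\in\dd_G}x$ acting under the regular representation $R$, so that $A=\sum_{x\in\dd_G}R_x=R_z$. The regular representation decomposes as $R\cong\bigoplus_{\chi\in\irr(G)}\chi(\id)\,V_\chi$, where $V_\chi$ is the irreducible module affording $\chi$ and $\dim V_\chi=\chi(\id)$; hence the $\chi$-isotypic component of $\mathbb{C}G$ has dimension $\chi(\id)^2$. Because $\dd_G$ is a union of conjugacy classes, $z$ lies in the center $Z(\mathbb{C}G)$, so by Schur's lemma its image $\rho_\chi(z)$ on the irreducible $V_\chi$ is a scalar, say $\eta_\chi I$.

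The computational heart is identifying that scalar, and here I would simply take traces: since $\rho_\chi(z)=\eta_\chi I$ on a space of dimension $\chi(\id)$,
\[
\chi(\id)\,\eta_\chi=\Tr\rho_\chi(z)=\sum_{x\in\dd_G}\Tr\rho_\chi(x)=\sum_{x\in\dd_G}\chi(x),
\]
which rearranges to the claimed formula $\eta_\chi=\tfrac{1}{\chi(\id)}\sum_{x\in\dd_G}\chi(x)$. Since $z$ acts as the single scalar $\eta_\chi$ on the entire $\chi$-isotypic component of the regular representation, that whole $\chi(\id)^2$-dimensional subspace is contained in the $\eta_\chi$-eigenspace of $A$, which gives the stated multiplicity.

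Since this is a standard fact about normal Cayley graphs (the only representation-theoretic inputs are Schur's lemma and the multiplicity of each irreducible in the regular representation), I do not expect a genuine obstacle so much as a point of bookkeeping that must be stated carefully: the formula assigns $\chi(\id)^2$ to each individual $\chi$, whereas distinct irreducible characters $\chi\neq\chi'$ may happen to produce the same numerical value $\eta_\chi=\eta_{\chi'}$, in which case the true geometric multiplicity of that eigenvalue is the sum of the corresponding $\chi(\id)^2$. The statement should therefore be read as describing the contribution of each isotypic block, and I would phrase the multiplicity claim with that understanding.
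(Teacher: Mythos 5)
Your proof is correct, and it is exactly the standard argument the paper itself defers to: the paper states this theorem without proof, citing Diaconis--Shahshahani and Chapter 4 of Ahmadi's thesis, where the proof proceeds just as you do (the central element $z=\sum_{x\in \dd_G}x$ of $\mathbb{C}G$, Schur's lemma on each isotypic component of the regular representation, and a trace computation to identify the scalar). Your closing remark that distinct characters with $\eta_{\chi}=\eta_{\chi'}$ contribute additively to the geometric multiplicity is the correct reading of the multiplicity claim.
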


We will make use of the following classical result (see \cite[Theorem
18.7]{huppert1998character}).

\begin{thm}\label{irrs_of_frobenius}
  Let $G=KH$ be a Frobenius group with the kernel $K$. Then the
  irreducible representations of $G$ are of the following two types:
\begin{enumerate}[(a)]
\item Any irreducible representation $\Psi$ of $H$ gives an
  irreducible representation of $G$ using the quotient map $H\cong
  G/K$. These give the irreducible representation of $G$ with $K$ in
  their kernel.
\item If $\Xi$ is any non-trivial irreducible representation of $K$,
  then the corresponding induced representation of $G$ is also
  irreducible. These give the irreducible representations of $G$ with
  $K$ not in their kernel. \qed
\end{enumerate}
\end{thm}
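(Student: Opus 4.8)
The plan is to split the irreducible representations of $G$ according to whether or not the kernel $K$ lies in their kernel, treating the two cases by inflation and by Clifford theory respectively. Type (a) is the routine direction. Since $K\triangleleft G$ with $G/K\cong H$, every irreducible representation $\Psi$ of $H$ inflates along the quotient map to an irreducible representation of $G$ whose kernel contains $K$; conversely, any irreducible representation of $G$ containing $K$ in its kernel factors through $G/K\cong H$. This sets up a bijection between $\irr(H)$ and the irreducible representations of $G$ with $K$ in their kernel, which is exactly (a).

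For (b) I would restrict an irreducible representation $\rho$ of $G$ with $K\not\leq\ker\rho$ to the normal subgroup $K$. By Clifford's theorem, $\rho|_K$ is a sum of $G$-conjugates of a single, necessarily non-trivial, irreducible $\Xi$ of $K$, and the whole analysis is governed by the inertia group $I_G(\Xi)=\{g\in G:\Xi^g\cong\Xi\}$. The central claim, and the crux of the argument, is that $I_G(\Xi)=K$ for every non-trivial $\Xi\in\irr(K)$. Granting this, the Clifford correspondence (equivalently, Mackey's irreducibility criterion) shows that the induced representation $\mathrm{Ind}_K^G\Xi$ is irreducible and that every irreducible representation of $G$ lying over $\Xi$ arises in this way.

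The hard part is establishing $I_G(\Xi)=K$, and this is where the Frobenius hypothesis enters. Because $K$ is normal it always lies in $I_G(\Xi)$, and since $G=KH$ it suffices to prove that no non-trivial $h\in H$ stabilises $\Xi$, i.e. $I_H(\Xi)=\{\id\}$. I would first record that $C_G(k)\leq K$ for every $k\in K\setminus\{\id\}$, which follows from the Frobenius condition together with the fact, noted above, that the non-identity elements of $K$ are precisely the derangements. From this one deduces that no non-trivial $h\in H$ fixes any non-trivial conjugacy class of $K$: if $hkh^{-1}=xkx^{-1}$ with $k\neq\id$ and $x\in K$, then $x^{-1}h\in C_G(k)\leq K$, forcing $h\in K\cap H=\{\id\}$. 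Brauer's permutation lemma then transfers this to the dual side, since for each $h$ the number of conjugacy classes of $K$ fixed by $h$ equals the number of irreducible characters fixed by $h$; hence a non-trivial $h$ fixes only the trivial character, giving $I_H(\Xi)=\{\id\}$ and therefore $I_G(\Xi)=K$.

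Finally I would verify completeness by a dimension count. The representations in (b) are parametrised by the $H$-orbits on the non-trivial elements of $\irr(K)$; by the semiregularity just proved each orbit has size $|H|$ and consists of characters of equal degree, and two orbits yield isomorphic induced representations precisely when they coincide. Writing $\Xi_1,\dots,\Xi_m$ for orbit representatives, the induced characters have degrees $|H|\,\Xi_j(\id)$, so using $\sum_{\Xi\neq 1}\Xi(\id)^2=|K|-1$ one finds that the squared degrees arising from (b) sum to $|H|(|K|-1)=|G|-|H|$; adding the contribution $|H|$ from (a) recovers $|G|=|K||H|$. This accounts for the full sum of squared degrees, so (a) and (b) together exhaust the irreducible representations of $G$.
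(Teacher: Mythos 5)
Your proof is correct, but note that the paper does not actually prove this theorem: it is quoted as a classical result with a pointer to Huppert [Theorem 18.7], so there is no internal argument to compare against, and your Clifford-theoretic route --- inflation for the representations with $K$ in their kernel, and for the rest the key claim that the inertia group satisfies $I_G(\Xi)=K$ for every non-trivial $\Xi\in\irr(K)$, proved via $C_G(k)\leq K$ for $\id\neq k\in K$ together with Brauer's permutation lemma applied to each cyclic group $\langle h\rangle$ --- is essentially the standard textbook proof of the cited theorem. Every step checks out, including the reduction $I_G(\Xi)=K\cdot I_H(\Xi)$ coming from $G=KH$ and $K\leq I_G(\Xi)$. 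Two small remarks. The fact $C_G(k)\leq K$, which you record without proof, does follow as you indicate: if $g\neq\id$ centralizes $k\neq \id$ and fixes a point $\alpha$, then $g=kgk^{-1}$ also fixes $k(\alpha)\neq\alpha$ (since the non-identity elements of $K$ are derangements), so $g$ fixes two points, contradicting the Frobenius condition; hence $g$ is itself a derangement and lies in $K$. Second, your closing dimension count is logically redundant: once $I_G(\Xi)=K$, the Clifford correspondence already shows that \emph{every} irreducible representation of $G$ whose kernel does not contain $K$ lies over some non-trivial $\Xi$ and equals $\mathrm{Ind}_K^G\,\Xi$, so exhaustiveness is settled before the count. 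Still, the count is a worthwhile consistency check, and as a byproduct it reproves the paper's earlier observation that $|H|$ divides $|K|-1=n-1$.
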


The following fact will be used in the proof of the next theorem; the
reader may refer to \cite[Section 1.10]{MR1824028} for a proof.

\begin{prop}\label{sum_of_dim_of_irrs}
  Let $\{\Psi_i\}$ be the set of all irreducible representations of a
  group $G$. Then
\[
\sum_i(\dim \Psi_i)^2=|G|.\qed
\]
\end{prop}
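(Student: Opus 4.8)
The plan is to prove the identity by decomposing the \emph{regular representation} of $G$ and comparing dimensions in two ways. First I would let $\rho$ denote the regular representation of $G$, acting on the group algebra $\mathbb{C}[G]$; its underlying vector space has dimension exactly $|G|$. The idea is that computing $\dim \mathbb{C}[G]$ directly gives $|G|$, while computing it through the decomposition of $\rho$ into irreducibles will produce $\sum_i (\dim \Psi_i)^2$.

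The crucial step is to find the multiplicity with which each irreducible $\Psi_i$ occurs in $\rho$. Working over $\mathbb{C}$, Maschke's theorem guarantees that $\mathbb{C}[G]$ is completely reducible, so $\rho \cong \bigoplus_i \Psi_i^{\oplus m_i}$ for some non-negative integers $m_i$. To pin down $m_i$ I would use the character $\chi_\rho$ of $\rho$, which is easily seen to satisfy $\chi_\rho(\id) = |G|$ and $\chi_\rho(g) = 0$ for every $g \neq \id$. The orthogonality relations for irreducible characters then give $m_i = \langle \chi_\rho, \chi_i \rangle = \frac{1}{|G|}\chi_\rho(\id)\,\overline{\chi_i(\id)} = \overline{\chi_i(\id)} = \dim \Psi_i$, using $\dim \Psi_i = \chi_i(\id)$, a positive integer. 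Hence each irreducible appears in the regular representation with multiplicity equal to its own dimension.

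Finally, comparing dimensions across $\rho \cong \bigoplus_i \Psi_i^{\oplus m_i}$ yields $|G| = \sum_i m_i \dim \Psi_i = \sum_i (\dim \Psi_i)^2$, which is the claim. I expect the only genuine obstacle to be justifying the multiplicity formula cleanly, since it rests on both complete reducibility and the character orthogonality relations; over $\mathbb{C}$ these are standard, so no characteristic issues arise. A shorter alternative would be to invoke the Artin--Wedderburn structure theorem, writing $\mathbb{C}[G] \cong \bigoplus_i M_{n_i}(\mathbb{C})$ with $n_i = \dim \Psi_i$ and equating dimensions directly, but the regular-representation computation above is more self-contained and stays closer to the character-theoretic language already used in the paper.
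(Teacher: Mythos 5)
Your proof is correct: the computation of the regular character, the multiplicity formula $m_i = \langle \chi_\rho, \chi_i\rangle = \dim \Psi_i$, and the dimension count all go through without issue over $\mathbb{C}$. The paper offers no proof of its own, deferring to \cite[Section 1.10]{MR1824028}, and the argument given there is precisely this standard decomposition of the regular representation, so your approach matches the cited source.
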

Now we can describe the derangement graph of the Frobenius groups.
\begin{thm}\label{gamma_of_frobenius} Let $G=KH\leq \sym(n)$ be a
  Frobenius group with the kernel $K$. Then $\Gamma_G$ is the disjoint
  union of $|H|$ copies of the complete graph on $n$ vertices.
\end{thm}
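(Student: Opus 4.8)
The plan is to bypass the eigenvalue machinery and read off the component structure of $\Gamma_G$ directly from its connection set. The one fact I would lean on is the one already recorded above: in a Frobenius group $G=KH$ the derangements are exactly the non-identity elements of the kernel, i.e. $\dd_G = K\setminus\{\id\}$. Since $\Gamma_G$ is by definition the Cayley graph on $G$ with connection set $\dd_G$, two vertices $\sigma,\pi$ are adjacent precisely when $\sigma\pi^{-1}\in K$ and $\sigma\neq\pi$.

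Next I would observe that $\sigma\pi^{-1}\in K$ holds if and only if $\sigma$ and $\pi$ lie in the same right coset of $K$; this needs only that $K$ is a subgroup, though of course the Frobenius kernel is in fact normal. Combining this with the previous paragraph, the adjacency relation becomes transparent: $\sigma$ and $\pi$ are adjacent if and only if they lie in the same coset of $K$ and are distinct. In other words, each coset of $K$ induces a complete graph, and there is no edge joining two different cosets.

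It then remains only to count. The cosets of $K$ partition the vertex set $G$ into $[G:K]=|G|/|K|=(n|H|)/n=|H|$ blocks, each of size $|K|=n$, and each such block induces a complete graph on its $n$ elements. Hence $\Gamma_G$ is the disjoint union of $|H|$ complete graphs on $n$ vertices, as claimed. There is no serious obstacle here: everything reduces to the already-stated description of the derangements together with the coset structure, and the only real care needed is to keep the kernel $K$ notationally distinct from the ``complete graph'' it is tempting to denote the same way.

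As a consistency check — and because it feeds into later arguments — one can instead compute the spectrum from Theorem~\ref{Diaconis}. The representations of type~(a) in Theorem~\ref{irrs_of_frobenius} contain $K$ in their kernel, so $\chi$ is constant on $K$ and $\sum_{x\in K\setminus\{\id\}}\chi(x)=(n-1)\chi(\id)$, giving the eigenvalue $n-1$; for the induced representations of type~(b), restricting to $K$ and using that a non-trivial irreducible character of $K$ sums to zero over $K$ forces $\sum_{x\in K\setminus\{\id\}}\chi(x)=-\chi(\id)$, giving the eigenvalue $-1$. Proposition~\ref{sum_of_dim_of_irrs}, applied both to $H$ and to $G$, then yields multiplicities $|H|$ and $|G|-|H|$ respectively. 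This is exactly the spectrum of $|H|$ disjoint complete graphs on $n$ vertices, confirming the combinatorial description above.
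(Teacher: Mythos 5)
Your proof is correct, and it takes a genuinely different route from the paper. The paper computes the full spectrum of $\Gamma_G$ via Theorem~\ref{Diaconis}: it splits the irreducible characters according to Theorem~\ref{irrs_of_frobenius}, obtains the eigenvalue $n-1$ with multiplicity $|H|$ from the type~(a) characters (using Proposition~\ref{sum_of_dim_of_irrs}) and the eigenvalue $-1$ with multiplicity $|H|(n-1)$ from the type~(b) characters (using vanishing of induced characters off $K$ plus orthogonality with the trivial character), and then invokes the fact that this spectrum forces the graph to be a disjoint union of $|H|$ complete graphs. Your argument instead reads the component structure directly off the connection set: since $\dd_G=K\setminus\{\id\}$ (recorded in the paper just before Lemma~\ref{size_of_G_x_frobenius}), adjacency of $\sigma$ and $\pi$ is exactly $\sigma\pi^{-1}\in K$ with $\sigma\neq\pi$, i.e.\ membership in a common coset of $K$, so the cosets of $K$ induce the $[G:K]=|H|$ complete graphs on $|K|=n$ vertices. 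Your version is more elementary and more informative: it identifies the components explicitly as the cosets of $K$ (a fact the paper re-derives and uses anyway in the proof of Theorem~\ref{EKR_for_frobenius}), and it avoids the spectral characterization step at the end of the paper's proof, which is itself a nontrivial fact (smallest adjacency eigenvalue $-1$ characterizes disjoint unions of cliques) left unjustified there. The paper's authors explicitly concede the point, remarking that their proof ``may not be the easiest method'' but serves as an illustration of character theory applied to graphs; your closing spectral consistency check recovers exactly their computation, with one small variation that also works: where the paper derives $\sum_{\sigma\in\dd_G}\chi(\sigma)=-\chi(\id)$ from orthogonality of $\chi$ with the trivial character of $G$, you restrict the induced character to $K$ and use that nontrivial irreducible characters of $K$ sum to zero over $K$. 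The only hypothesis worth flagging, which you do flag, is that $K$ is a subgroup -- this is Frobenius's theorem and is built into the paper's standing notation $G=KH$ with $|K|=n$, so your use of it is legitimate.
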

\begin{proof}
According to Theorem~\ref{Diaconis}, the eigenvalues  of $\Gamma_G$ are given by 
\[
\eta_{\chi}=\frac{1}{\chi(\id)}\sum_{\sigma\in \dd_G} \chi(\sigma),
\]
where $\chi$ runs through the set of all irreducible characters of
$G$. First assume $\chi$ is the character of an irreducible
representation of $G$ of type (a) in
Theorem~\ref{irrs_of_frobenius}. Then we have
\[
\eta_{\chi}=\frac{1}{\chi(\id)}\sum_{\sigma\in \dd_G} \chi(\sigma)
          =\frac{1}{\chi(\id)}\sum_{\sigma\in \dd_G} \chi(\id)=|\dd_G|=|K|-1=n-1.
\]
According to Theorem~\ref{Diaconis} and
Proposition~\ref{sum_of_dim_of_irrs}, the multiplicity of
$\eta_{\chi}=n-1$ is
\[
\sum_{\Psi\in\irr(H)}(\dim \Psi)^2=|H|.
\]
Furthermore, assume $\Xi$ is an irreducible representation of $G$ of
type (b) in Theorem~\ref{irrs_of_frobenius}, whose character is $\xi$
and let $\chi$ be the character of the corresponding induced
representation of $G$.  If $\sigma\in G\backslash K$, then $\sigma\in
H^g$, for some $g\in G$. Thus $x^{-1}\sigma x\in H^{gx}$, for any
$x\in G$; hence $x^{-1}\sigma x \notin K$. By the definition of an
induced representation (see \cite[Section 3.3]{MR1153249}), this
implies that $\chi(\sigma)=0$. On the other hand, let $\chi_{\ID}$ be
the trivial character of $G$. Since $\chi\neq \chi_{\ID}$, the inner
product of $\chi$ and $\chi_{\ID}$ is zero (see \cite[Section
2.2]{MR1153249}). Since
\[
\left\langle \chi , \chi_{\ID}\right\rangle 
 = \frac{1}{|G|} \sum_{\sigma\in G} \chi(\sigma) \chi_{\ID}(\sigma^{-1})
 =\frac{1}{|G|} \sum_{\sigma\in G} \chi(\sigma)
 =\frac{1}{|G|}\left(\chi(\id)+\sum_{\sigma\in \dd_G} \chi(\sigma)\right),
\]
and the characters $\chi$ and $\chi_1$ are orthogonal, we have that
\[
\sum_{\sigma\in \dd_G} \chi(\sigma)=-\chi(\id).
\] 
This yields
\[
\eta_{\chi}=\frac{1}{\chi(\id)}\sum_{\sigma\in \dd_G} \chi(\sigma)=\frac{-\chi(\id)}{\chi(\id)}=-1.
\]
 We have, therefore, shown that 
\[
\Spec(\Gamma_G)=\left( {\begin{array}{cc}
 n-1 & -1 \\
 |H| & |H|(n-1)\\
 \end{array} } \right).
\]
Now the theorem follows from the fact that any graph with these
eigenvalues is the union of $|H|$ complete graphs.
\end{proof}

We point out that this proof may not be the easiest method to describe
the graph $\Gamma_G$; however it is a nice example of an application
of character theory to graph theory.   Now we establish
the EKR property for the Frobenius groups.

\begin{thm}\label{EKR_for_frobenius} 
  Let $G=KH$ be a Frobenius group with kernel $K$. Then $G$ has the
  EKR property. Furthermore, $G$ has the strict EKR property if and
  only if $|H|=2$.
\end{thm}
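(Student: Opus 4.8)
The plan is to lean heavily on Theorem~\ref{gamma_of_frobenius}, which tells us that $\Gamma_G$ is a disjoint union of $|H|$ copies of the complete graph $K_n$. For any graph that is a disjoint union of $m$ cliques each of size $n$, a maximum independent set is obtained by choosing exactly one vertex from each clique, so $\alpha(\Gamma_G)=m=|H|$. By Lemma~\ref{size_of_G_x_frobenius} the stabilizer of a point has size $|H|$ as well, so this immediately gives $\alpha(\Gamma_G)=|H|=|G_x|$, establishing the EKR property. The bulk of the work is the ``strict'' part, and I would organize it as two implications.

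\medskip

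\noindent\emph{The easy direction} is that $|H|\neq 2$ fails strictness. First I would dispose of $|H|=1$: then $G=K$ is sharply transitive, $\Gamma_G$ is a single $K_n$, and the only maximum independent sets are single vertices, which are cosets of the trivial stabilizer, so strict EKR holds trivially (so this case should be folded into the ``$=2$'' conclusion, or one notes $|H|\ge 2$ throughout since $H$ is a nontrivial complement). For $|H|\ge 3$, I would exhibit an intersecting set of size $|H|$ that is \emph{not} a coset of a point-stabilizer. Because $\Gamma_G$ is a disjoint union of cliques, \emph{every} transversal (one vertex per clique) is a maximum independent set; since there are $n^{|H|}$ such transversals but only $n\cdot(\text{number of point-stabilizers})$ cosets of stabilizers, a counting argument shows that for $|H|\ge 3$ (in fact whenever $n\ge 2$ and $|H|\ge 3$) there must exist a transversal that is not a coset of a stabilizer. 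Alternatively, and more concretely, one can build such a set by hand: take $\id$ together with elements chosen from the other $|H|-1$ cliques so that the collection of points fixed by the chosen permutations is not contained in a single $G_x$-coset pattern.

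\medskip

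\noindent\emph{The harder direction} is that $|H|=2$ forces strictness. Here $G$ has order $2n$ with $n$ odd (a Frobenius complement of order $2$ forces $G$ dihedral-like; indeed $K$ is abelian of odd order and the complement acts by inversion). The derangement graph is two disjoint copies of $K_n$. I must show every maximum independent set (a transversal picking one vertex from each of the two cliques) is a coset of a point-stabilizer. The two cliques are $K$ (the kernel, i.e.\ $\id$ together with the derangements) and its complement $G\setminus K$, the set of the $n$ involutions $\{\sigma^i\pi\}$. A maximum independent set thus consists of one element $k\in K$ and one element $\sigma^i\pi$, and independence means $k$ and $\sigma^i\pi$ \emph{agree} on a point, i.e.\ $k^{-1}\sigma^i\pi$ has a fixed point. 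I would show this pair always is the stabilizer $G_x=\{\id,\tau\}$ of the unique point $x$ fixed by the involution $\tau=\sigma^i\pi$, after translating by $k$: concretely, $k^{-1}S=\{\id,\,k^{-1}\sigma^i\pi\}$ is an intersecting pair containing $\id$, so by the same argument as in Proposition~\ref{dihedral} it equals a point-stabilizer, whence $S$ is a coset of one.

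\medskip

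\noindent\textbf{Main obstacle.} I expect the delicate point to be the structural claim in the $|H|=2$ case that an involution in the complement fixes \emph{exactly} one point, so that $\{\id,\tau\}$ really is a full point-stabilizer rather than a proper subset of one; this is where the Frobenius hypothesis (no nonidentity element fixes more than one point, and the complement has order dividing $n-1$) must be invoked precisely, paralleling the dihedral computation. The counting/construction for the $|H|\ge 3$ failure is routine once the disjoint-clique structure is in hand, so the real content is checking that for $|H|=2$ every transversal coincides with a coset of a two-element stabilizer.
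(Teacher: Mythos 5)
Your proposal is correct and takes essentially the same route as the paper: the EKR property from Theorem~\ref{gamma_of_frobenius} plus Lemma~\ref{size_of_G_x_frobenius}, failure of strictness for $|H|>2$ via clique transversals that are not stabilizer cosets (the paper's main proof swaps one element of $G_x$ for a non-fixing element of the same $K$-coset, and the remark following the theorem gives exactly your $n^{|H|}$-versus-$n^2$ count), and strictness for $|H|=2$ by translating an intersecting pair to one containing the identity, where the Frobenius condition forces $\{\id,\tau\}$ to be a full stabilizer of size $2$. One harmless caveat: writing $G\setminus K$ as $\{\sigma^i\pi\}$ tacitly assumes a cyclic kernel, which a Frobenius kernel need not be (e.g.\ $(\mathbb{Z}_3\times\mathbb{Z}_3)\rtimes\mathbb{Z}_2$ with inversion), but your argument never actually uses this, only that a nonidentity element fixing a point lies in a point-stabilizer of size exactly $|H|=2$.
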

\begin{proof}
  Using Theorem~\ref{gamma_of_frobenius}, the independence number of
  $\Gamma_G$ is $|H|$. This along with
  Lemma~\ref{size_of_G_x_frobenius} shows that $G$ has the EKR
  property. For the second part of the theorem, first note that if
  $|H|=2$ and $S$ is an intersecting subset of $G$ of size two, then
  $S$ is, trivially, a point stabilizer.  To show the converse, we
  note that the cliques of $\Gamma_G$ are induced by the $|H|$ cosets
  of $K$ in $G$. Now suppose $|H|>2$ and let $S$ be a maximum
  intersecting subset of $G$ which is a coset of a point-stabilizer in
  $G$. Without loss of generality we may assume $S=\{\id=s_1,
  s_2,\dots, s_{|H|}\}$ and, hence, $S=G_x$, for some $x\in
  \{1,2,\dots,n\}$. Since $S$ is independent in $\Gamma_G$, no two elements of $S$
  are in the same coset of $K$ in $G$. Note that, the only fixed point
  of any non-identity element of $S$ is $x$. Let $s_3$ be in the coset
  $gK$. If all the elements of $gK$ fix $x$, then all the elements of
  $K$ will have fixed points, which is a contradiction. Hence there is
  an $s'_3\in gK$ which does not fix $x$. Now the maximum intersecting
  set $S'=\left(S\backslash \{s_3\}\right)\cup\{s'_3\}$ is not a
  point-stabilizer.
\end{proof}

Note that one can show the second part of
Theorem~\ref{EKR_for_frobenius} by a simple counting argument as
follows. There are $n^2$ cosets of point-stabilizers in $G$. Since
$\Gamma_G$ is the union of $|H|$ copies of the complete graph on $n$
vertices, the total number of maximum independent sets is
$n^{|H|}$. Therefore, in order for all the maximum independent sets of
$\Gamma_G$ to be cosets of point-stabilizers, the necessary and
sufficient condition is $|H|=2$.

Theorem~\ref{EKR_for_frobenius} provides an alternative proof for the
fact that the dihedral group $D_n$ has the strict EKR property, when
$n\geq 3$ is odd.

\section{The EKR property for external direct products}\label{EKR_products}

Given a sequence of permutation groups $G_1\leq \sym(n_1),\,\dots\,,
G_k\leq \sym(n_k)$, their \textsl{external direct product} is defined
to be the group $G_1\times \cdots\times G_k$, whose elements are
$(g_1,\ldots,g_k)$, where $g_i\in G_i$, for $1\leq i\leq k$, and the
binary operation is, simply, the component-wise multiplication. This
group has a natural action on the set $\Omega=[n_1]\times\cdots\times
[n_k]$ induced by the natural actions of $G_i$ on $[n_i]$; that is,
for any tuple $(x_1,\dots, x_k)\in \Omega$ and any element
$(g_1,\dots,g_k)\in G_1\times\cdots\times G_k$, we have
\[
(x_1,\dots, x_k)^{(g_1,\dots,g_k)}:=(x_1^{g_1},\dots, x_k^{g_k}).
\]
Let $G=G_1\times\cdots\times G_k$. Then the derangement graph
$\Gamma_G$ of $G$, is the graph with vertex set $G$ in which two
vertices $(g_1, \dots,g_k)$ and $(h_1,\dots,h_k)$ are adjacent if and
only if $g_ih_i^{-1}$ is a derangement, for some $1\leq i\leq k$. 

It is possible to express the derangement graph for a group that is an
external direct product at a product of derangement graphs. To do
this, we need two graph operations. The first is the
\textsl{complement}.  The complement of a graph $X$ is the graph
$\overline{X}$ on the same vertex set as $X$ and the edge set is
exactly all the pairs of vertices which are not adjacent in $X$.  The
second operation is the \textsl{direct product} of two graphs. Let $X$ and $Y$
be two graphs. The direct product of $X$ and $Y$ is the graph $X\times
Y$ whose vertex set is
\[
V(X\times Y)=V(X)\times V(Y),
\]
and in which two vertices $(x_1,y_1)$ and $(x_2,y_2)$ are adjacent if
$x_1\sim x_2$ in $X$ and $y_1\sim y_2$ in $Y$. We observe the following.

\begin{lem} 
Let the group $G=G_1\times\cdots\times G_k$ be the external direct product of the groups $G_1,\dots, G_k$, then
\[
\Gamma_G=\overline{\overline{\Gamma_{G_1}}\times\cdots\times\overline{\Gamma_{G_k}}}.
\]
\end{lem}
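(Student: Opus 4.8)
The plan is to unwind both sides of the claimed identity to a common statement about when two vertices are adjacent, since the vertex sets already agree: both $\Gamma_G$ and the iterated direct product have vertex set $V(\Gamma_{G_1})\times\cdots\times V(\Gamma_{G_k})=G_1\times\cdots\times G_k=G$. So the entire content is an edge-set identity, and I would prove it by comparing adjacency conditions for an arbitrary pair of vertices $(g_1,\dots,g_k)$ and $(h_1,\dots,h_k)$.

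First I would record the adjacency condition in each $\overline{\Gamma_{G_i}}$: by definition of the complement, $g_i\sim h_i$ in $\overline{\Gamma_{G_i}}$ if and only if they are \emph{not} adjacent in $\Gamma_{G_i}$, i.e. if and only if $g_ih_i^{-1}$ is \emph{not} a derangement, which (given $g_i\neq h_i$ is not required here) means $g_i$ and $h_i$ agree on some point of $[n_i]$. I should be slightly careful about loops versus the $g_i=h_i$ case, but the complement convention together with the fact that the derangement graph is loopless makes $g_i\sim h_i$ in $\overline{\Gamma_{G_i}}$ hold precisely when $g_ih_i^{-1}$ fixes a point. Then by the definition of the direct product of graphs, the tuples are adjacent in $\overline{\Gamma_{G_1}}\times\cdots\times\overline{\Gamma_{G_k}}$ if and only if $g_i\sim h_i$ in $\overline{\Gamma_{G_i}}$ for \emph{every} $i$, that is, if and only if $g_ih_i^{-1}$ fixes a point for all $i$.

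Next I would take the outer complement. A pair of tuples is adjacent in $\overline{\overline{\Gamma_{G_1}}\times\cdots\times\overline{\Gamma_{G_k}}}$ if and only if it is \emph{not} adjacent in the inner direct product, i.e. if and only if it is \emph{not} the case that $g_ih_i^{-1}$ fixes a point for every $i$. Negating the universal quantifier, this says there exists some index $i$ for which $g_ih_i^{-1}$ is a derangement. But this is exactly the adjacency condition for $\Gamma_G$ recorded in the paragraph preceding the lemma: $(g_1,\dots,g_k)$ and $(h_1,\dots,h_k)$ are adjacent in $\Gamma_G$ iff $g_ih_i^{-1}$ is a derangement for some $i$. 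Since the two edge relations coincide on the common vertex set, the graphs are equal.

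I expect the only real subtlety — the ``hard part,'' though it is more a matter of care than difficulty — to be the bookkeeping around reflexive pairs and the complement of a loopless graph. One must confirm that the direct-product construction as stated does not introduce loops that survive complementation (equivalently, that the identity vertex of $\Gamma_G$ is correctly non-adjacent to itself), and that ``agrees on a point'' in component $i$ is genuinely the De Morgan dual of ``is a derangement in component $i$.'' Once the adjacency conditions are written out as quantified statements, the equivalence is a single application of De Morgan's law, so I would keep the exposition to the three adjacency computations above rather than a long chain of formulas.
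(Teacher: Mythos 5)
Your proof is correct and follows essentially the same route as the paper's: both argue that non-adjacency in $\Gamma_G$ (i.e.\ adjacency in $\overline{\Gamma_G}$) means $g_ih_i^{-1}$ fixes a point for \emph{every} $i$, which is precisely adjacency in the direct product of the complements, and then complement back. Your extra care about loops and the $g_i=h_i$ case is a harmless refinement of the paper's one-paragraph adjacency comparison, not a different argument.
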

\begin{proof}
  By the definition of the external direct product, the vertices
  $(g_1, \dots,g_k)$ and $(h_1,\dots,h_k)$ of $\overline{\Gamma_G}$
  are adjacent if and only if $g_ih_i^{-1}$ has a fixed point, for any
  $1\leq i\leq k$. This is equivalent to the case where $g_i$ is
  adjacent to $h_i$ in $\overline{\Gamma_{G_i}}$, for any $1\leq i\leq
  k$. This occurs if and only if $(g_1, \dots,g_k)$ and
  $(h_1,\dots,h_k)$ are adjacent in
  $\overline{\Gamma_{G_1}}\times\cdots\times\overline{\Gamma_{G_k}}$. This
  completes the proof.
\end{proof}

In the next lemma, we evaluate the independence number of $\Gamma_G$.
If $G=G_1\times\cdots\times G_k$, define $p_i$ to be the projection of
$G$ onto the component $G_i$.

\begin{lem}\label{independence_of_external_prod} 
Let the group $G=G_1\times\cdots\times G_k$ be the external direct product of the groups $G_1,\dots, G_k$, then
\[
\alpha(\Gamma_G)=\alpha(\Gamma_{G_1})\times \cdots \times \alpha(\Gamma_{G_k}).
\]
\end{lem}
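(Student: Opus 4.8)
The plan is to prove the two inequalities $\alpha(\Gamma_G) \leq \prod_i \alpha(\Gamma_{G_i})$ and $\alpha(\Gamma_G) \geq \prod_i \alpha(\Gamma_{G_i})$ separately. The lower bound is the easy direction: if $S_i$ is a maximum independent set in $\Gamma_{G_i}$ for each $i$, then I claim $S_1 \times \cdots \times S_k$ is independent in $\Gamma_G$. Indeed, two tuples $(g_1,\dots,g_k)$ and $(h_1,\dots,h_k)$ with each $g_i,h_i \in S_i$ are non-adjacent in $\Gamma_G$ precisely when $g_i h_i^{-1}$ has a fixed point for every $i$, which holds since each $S_i$ is intersecting in $G_i$. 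This product set has size $\prod_i \alpha(\Gamma_{G_i})$, giving the lower bound immediately.

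So let me think about this more carefully. The upper bound is where the real content lies, and there are two natural routes. One is purely combinatorial via projections: for an arbitrary maximum independent set $S$ in $\Gamma_G$, I would try to bound $|S|$ by controlling the fibers of the projections $p_i$. The cleaner route, however, uses the structural description from the previous lemma, $\Gamma_G = \overline{\overline{\Gamma_{G_1}} \times \cdots \times \overline{\Gamma_{G_k}}}$, which tells me that an independent set in $\Gamma_G$ is exactly a clique in $\overline{\Gamma_{G_1}} \times \cdots \times \overline{\Gamma_{G_k}}$, equivalently a set on which every pair agrees coordinate-wise in the intersecting sense. The key observation is that a subset $S \subseteq V(X \times Y)$ is a clique in the direct product $X \times Y$ if and only if its projections $p_X(S)$ and $p_Y(S)$ are cliques in $X$ and $Y$ respectively \emph{and} $S$ injects into $p_X(S) \times p_Y(S)$; but for cliques in a direct product one actually has the stronger fact that $S$ is contained in (a product of) cliques, so that $|S| \leq \omega(X)\,\omega(Y)$. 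Iterating across the $k$ factors and translating cliques in $\overline{\Gamma_{G_i}}$ back into independent sets in $\Gamma_{G_i}$ (since $\omega(\overline{\Gamma_{G_i}}) = \alpha(\Gamma_{G_i})$) yields $|S| \leq \prod_i \alpha(\Gamma_{G_i})$.

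The cleanest way to carry this out is probably by induction on $k$, reducing to the case $k=2$: I would show that for graphs $X,Y$ one has $\omega(X \times Y) = \omega(X)\,\omega(Y)$, or at least the upper bound $\omega(X\times Y) \leq \omega(X)\,\omega(Y)$ which is all I need. For the upper bound, take a maximum clique $Q$ in $X \times Y$; its images $p_X(Q)$ and $p_Y(Q)$ are cliques, so $|p_X(Q)| \leq \omega(X)$ and $|p_Y(Q)| \leq \omega(Y)$, and since any two distinct vertices of $Q$ must differ in at least one coordinate while being adjacent in both, the map $Q \hookrightarrow p_X(Q) \times p_Y(Q)$ is injective, giving $|Q| \leq \omega(X)\,\omega(Y)$. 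Combined with the matching lower bound (a product of maximum cliques is a clique), this establishes equality for the complement-product graph and hence for $\alpha(\Gamma_G)$.

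The main obstacle to watch for is the injectivity argument in the direct product: two distinct vertices $(x_1,y_1) \neq (x_2,y_2)$ of a clique in $X \times Y$ must be adjacent, which by definition of the direct product forces $x_1 \sim x_2$ \emph{and} $y_1 \sim y_2$, so in particular $x_1 \neq x_2$ and $y_1 \neq y_2$ (no loops), meaning \emph{both} projections are injective on the clique. This is precisely what makes $|Q| \leq \omega(X)\,\omega(Y)$ work, and it is the one step that deserves to be stated carefully rather than waved through; everything else is bookkeeping. I would therefore organize the write-up as: first the product-of-independent-sets lower bound, then the clique-in-direct-product upper bound with the injectivity observation made explicit, and finally the induction on $k$ to assemble the full product formula.
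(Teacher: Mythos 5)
Your lower bound is correct and matches the paper. The upper bound, however, has a genuine gap rooted in a loop/reflexivity issue with the direct product. In the (loopless) direct product $X\times Y$, two vertices that agree in a coordinate are never adjacent; consequently an independent set in $\Gamma_G$ is \emph{not} in general a clique in $\overline{\Gamma_{G_1}}\times\cdots\times\overline{\Gamma_{G_k}}$. Indeed your own lower-bound witness $S_1\times\cdots\times S_k$ contains pairs such as $(g_1,g_2,\dots,g_k)$ and $(g_1,g_2',\dots,g_k')$ that share their first coordinate and hence are non-adjacent in the product of complements, even though they intersect. The translation ``independent in $\Gamma_G$ $=$ clique in the product of complements'' only holds if one adds a loop at every vertex of each $\overline{\Gamma_{G_i}}$, i.e., works in the reflexive category. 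Worse, your ``key'' injectivity observation --- that for a clique $Q$ in a loopless direct product \emph{each} projection is injective on $Q$ --- is correct but proves the much stronger bound $\omega(X\times Y)\le\min\{\omega(X),\omega(Y)\}$; pushed through your (faulty) translation it would give $\alpha(\Gamma_G)\le\min_i\alpha(\Gamma_{G_i})$, flatly contradicting your own lower bound $\prod_i\alpha(\Gamma_{G_i})$ as soon as $k\ge 2$ and the $\alpha$'s exceed $1$ (take $G_1=G_2=\sym(3)$, where the lemma gives $\alpha(\Gamma_G)=4$ but your clique bound would give $2$). This internal inconsistency should have flagged the translation step as the culprit. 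To be fair, the paper's own unnumbered complement lemma is stated with the same loop-sloppiness, but the paper never uses it in this proof; your added injectivity step is where the sloppiness becomes a wrong bound.

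The repair is to abandon injectivity and the loopless clique formalism. Either pass to reflexive graphs --- where the pairwise condition is ``equal or adjacent'' in every coordinate, the projections $p_i(S)$ are cliques in the usual sense (though the projections are no longer injective on $S$), and the trivial containment $S\subseteq p_1(S)\times\cdots\times p_k(S)$ yields $|S|\le\prod_i\omega(\overline{\Gamma_{G_i}})=\prod_i\alpha(\Gamma_{G_i})$ --- or, more simply, argue directly as the paper does: if $S$ is independent in $\Gamma_G$, then for all $g,h\in S$ and \emph{every} $i$ the permutation $g_ih_i^{-1}$ has a fixed point, so each $p_i(S)$ is an independent set in $\Gamma_{G_i}$ of size at most $\alpha(\Gamma_{G_i})$, and $S\subseteq p_1(S)\times\cdots\times p_k(S)$ finishes the proof in four lines, with no induction on $k$ needed.
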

\begin{proof}
  Let $S_i$ be a maximum independent set in $\Gamma_{G_i}$, for any
  $1\leq i\leq k$. Then the set $S=S_1\times\cdots\times S_k$ is an
  independent set in $\Gamma_G$, proving that $\alpha(\Gamma_G)\geq
  \alpha(\Gamma_{G_1})\times \cdots \times \alpha(\Gamma_{G_k})$. On the
  other hand, let $S$ be a maximum independent set in $\Gamma_G$, then
  for any $1\leq i\leq k$, the set $p_i(S)$ is an independent set in
  $G_i$. Thus $|p_i(S)|\leq \alpha(\Gamma_{G_i})$. Since $S\subseteq
  p_1(S)\times\cdots\times p_k(S)$ the lemma follows.
\end{proof}

\begin{thm}\label{EKR_for_external_prod} 
  With the notation above, all the $G_i$ have the (strict) EKR
  property if and only if $G$ has the (strict) EKR property.
\end{thm}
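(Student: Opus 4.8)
The plan is to prove both directions of each equivalence (EKR and strict EKR) by relating the relevant combinatorial quantities of $G = G_1 \times \cdots \times G_k$ to those of the factors, leaning heavily on Lemma~\ref{independence_of_external_prod}. The key numerical identities I would assemble first are: the order $|G| = \prod_i |G_i|$; the size of a largest point-stabilizer, which (since the action is componentwise on $[n_1]\times\cdots\times[n_k]$) is $\prod_i \beta_i$, where $\beta_i$ denotes the largest point-stabilizer size in $G_i$; and the independence number $\alpha(\Gamma_G) = \prod_i \alpha(\Gamma_{G_i})$ from the lemma. Note that the stabilizer of a point $(x_1,\dots,x_k)$ in $G$ is exactly the product $\prod_i (G_i)_{x_i}$ of the component stabilizers, so the largest stabilizer of $G$ is the product of the largest stabilizers of the factors.

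**For the EKR property**, observe that $G$ has the EKR property iff $\alpha(\Gamma_G)$ equals the largest stabilizer size $\prod_i \beta_i$. In each factor we always have $\alpha(\Gamma_{G_i}) \geq \beta_i$ (a largest stabilizer is intersecting). Hence $\alpha(\Gamma_G) = \prod_i \alpha(\Gamma_{G_i}) \geq \prod_i \beta_i$ always, and equality holds iff $\alpha(\Gamma_{G_i}) = \beta_i$ for every $i$ — precisely the statement that each $G_i$ has the EKR property. This gives the EKR equivalence cleanly in both directions, because a product of terms each $\geq$ its counterpart equals the product of the counterparts iff every factor is equal (all quantities being positive integers).

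**For the strict EKR direction**, I would argue each implication separately. If every $G_i$ has the strict EKR property, I would take a maximum independent set $S$ in $\Gamma_G$ and show it is a coset of a point-stabilizer. Since $|S| = \prod_i \alpha(\Gamma_{G_i})$ and $S \subseteq p_1(S) \times \cdots \times p_k(S)$ with each $|p_i(S)| \leq \alpha(\Gamma_{G_i})$, a counting argument forces $|p_i(S)| = \alpha(\Gamma_{G_i})$ for all $i$ and $S = p_1(S) \times \cdots \times p_k(S)$ exactly. Each $p_i(S)$ is then a maximum independent set in $\Gamma_{G_i}$, hence a coset of a stabilizer by strict EKR in $G_i$; the product of such cosets is a coset of the product stabilizer, i.e.\ a coset of a point-stabilizer in $G$. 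Conversely, if some $G_j$ fails strict EKR, it has a maximum independent set $T_j$ that is not a coset of a stabilizer; taking the product of $T_j$ with maximum stabilizer-coset independent sets in the remaining factors yields a maximum independent set in $\Gamma_G$ that is not a coset of a point-stabilizer of $G$.

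**The main obstacle** I anticipate is the forward strict-EKR direction, specifically justifying that a maximum independent set $S$ must actually factor as a full product $p_1(S) \times \cdots \times p_k(S)$ rather than merely being contained in it. The containment and the size bounds force equality of cardinalities, $|S| = \prod_i |p_i(S)|$, and combined with $S \subseteq \prod_i p_i(S)$ this pins down $S$ to be the entire product set; this is a finite-set counting step but deserves to be stated carefully, since the definition of strict EKR for $G$ requires $S$ itself (not just its projections) to be a genuine coset of a point-stabilizer.
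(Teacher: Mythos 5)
Your proposal is correct and follows essentially the same route as the paper: both directions rest on Lemma~\ref{independence_of_external_prod} together with the observation that the stabilizer of $(x_1,\dots,x_k)$ in $G$ is $(G_1)_{x_1}\times\cdots\times(G_k)_{x_k}$. The differences are only presentational — you package the EKR equivalence as a single product inequality rather than two separate directions, run the strict converse by contrapositive instead of the paper's direct embedding of $S_1\times(G_2)_{x_2}\times\cdots\times(G_k)_{x_k}$, and spell out the counting step forcing $S=p_1(S)\times\cdots\times p_k(S)$, which the paper uses implicitly when it passes from $p_i(S)=(G_i)_{x_i}$ to $S=G_{(x_1,\dots,x_k)}$; that last point is a worthwhile clarification rather than a new idea.
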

\begin{proof}
Note that the stabilizer of any point $(x_1,\dots,x_k)\in \Omega$ in $G$ is 
\[
(G_1)_{x_1}\times\cdots\times (G_k)_{x_k}.
\]
We first prove the ``only if'' part of the theorem. If all the groups $G_i$ have the EKR property,
according to Lemma~\ref{independence_of_external_prod}, the maximum
size of an independent set in $\Gamma_G$ will be equal to
\[
|(G_1)_{x_1}|\times \cdots \times |(G_k)_{x_k}|,
\]
for some $(x_1,\dots,x_k)\in \Omega$; this proves that $G$ has the EKR
property. Furthermore, assume all the groups $G_i$ have strict EKR
property and let $S$ be a maximum independent set in $\Gamma_{G}$ that
contains the identity. This implies that for each $1\leq i\leq k$, the
set $p_i(S)$ is a maximum independent set in $\Gamma_{G_i}$ that
contains the identity; hence $p_i(S)=(G_i)_{x_i}$, for some $x_i\in
[n_i]$. Therefore, $S=G_{(x_1,\dots, x_k)}$, which shows that $G$ has
the strict EKR property.

To prove the ``if'' part, first assume $G$ has the EKR property. Then
the size of any independent set is bounded above by
$|G_{(x_1,\dots,x_k)}|$, for some $(x_1,\dots,x_k)\in
[n_1]\times\cdots\times[n_k]$.  Let $S_1$ be an independent set in
$G_1$. Then $S_1\times (G_2)_{x_2} \times \cdots\times (G_k)_{x_k}$ is
an independent set in $G$. Hence
\[
|S_1\times(G_2)_{x_2}\times\cdots\times(G_k)_{x_k}|\leq |G_{(x_1,\dots,x_k)}|=|(G_1)_{x_1}|\times\cdots\times |(G_k)_{x_k}|,
\]
thus $|S_1|\leq |(G_1)_{x_1}|$. This shows that $G_1$ and, similarly,
$G_2,\ldots,G_k$ have the EKR property.  Finally, assume $G$ has the
strict EKR property and let $S_1$ be a maximum independent set in
$G_1$ which contains the identity. This implies that $S_1\times
(G_2)_{x_2} \times \cdots\times (G_k)_{x_k}$ is a maximum independent
set in $G$; hence it must stabilize a point, say $(y_1,\dots,y_k)\in
[n_1]\times\cdots\times[n_k]$. Therefore, $S$ stabilizes $y_1$; that
is, $S\subseteq (G_1)_{y_1}$. Since $S$ has the maximum size, we
conclude that $S=(G_1)_{y_1}$, which completes that proof.
\end{proof}

We conclude this section by noting this theorem shows how to construct
infinite families of graphs that do not have (strict) EKR property.

\section{The EKR property for internal direct products}

The next group product that we consider is the internal direct product. Assume
$\Omega_1,\dots,\Omega_k$ are pair-wise disjoint non-empty subsets of
$\{1,2,\dots,n\}$, and consider the sequence $G_1\leq \sym(\Omega_1),\,\dots\,,
G_k\leq \sym(\Omega_k)$. Then their \textsl{internal direct product}
is defined to be the group $G_1\cdot G_2\cdot \cdots\cdot G_k$, whose
elements are $g_1g_2\cdots g_k$, where $g_i\in G_i$, for $1\leq i\leq
k$ and the binary operation is defined as follows: for the elements
$g_1g_2\cdot \cdots \cdot g_k$ and $h_1h_2\cdot \cdots \cdot h_k$ in
$G_1\leq \sym(\Omega_1),\,\dots\, G_k\leq \sym(\Omega_k)$,
\begin{equation}\label{binary_operation_internal_prod}
g_1g_2\cdots g_k\,\,\cdot \,\, h_1h_2\cdots h_k := (g_1h_1)(g_2h_2) \cdots (g_kh_k).
\end{equation}
Note that since the $\Omega_i$ do not intersect, any permutation in
$G_i$ commutes with any permutation in $G_j$, for any $1 \leq i\neq
j\leq k$; hence the multiplication in Equation
(\ref{binary_operation_internal_prod}) is well-defined.  This group
also has a natural action on the set $\Omega=\Omega_1\cup\cdots\cup
\Omega_k$ induced by the natural actions of $G_i$ on $\Omega_i$; that
is, for any $x\in \Omega$ and any element $g_1 g_2\cdots g_k\in
G_1\cdot G_2\cdot\cdots\cdot G_k$, we have
\[
x^{g_1 g_2\cdots g_k}:= x^{g_i},\quad \text{where } x\in \Omega_i.
\]
Let $G=G_1\cdot G_2\cdot\cdots\cdot G_k$. Then the derangement graph
of $G$ is the graph $\Gamma_G$ with vertex set $G$ in which two
vertices $g_1g_2\cdot \cdots \cdot g_k$ and $h_1h_2\cdot \cdots \cdot
h_k$ are adjacent if and only if $g_ih_i^{-1}$ is a derangement, for
all $1\leq i\leq k$.  In other words, $\Gamma_G$ is the direct product
of $\Gamma_{G_1},\dots,\Gamma_{G_k}$; that is
\begin{equation}\label{gamma_of_internal_prod}
\Gamma_G=\Gamma_{G_1}\times\cdots\times\Gamma_{G_k}.
\end{equation}

It is not difficult to see that
\[
\alpha(X\times Y)\geq\max\{\alpha(X)|Y|\,,\,\alpha(Y)|X|\}.
\]
This inequality can be strict for general graphs (see \cite{klavzar}),
but recently Zhang~\cite{Zhang2012832} proved that equality holds if both
graphs are vertex transitive.

\begin{thm} If $X$  and $Y$ are vertex-transitive graphs, then 
\[
\alpha(X\times Y)=\max\{\alpha(X)|Y|\,,\,\alpha(Y)|X|\}.\qed
\]
\end{thm}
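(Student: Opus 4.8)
The plan is to reduce the statement to a known identity for the fractional chromatic number. Write $i(X)=\alpha(X)/|X|$ for the independence ratio, and let $\chi_f$ denote the fractional chromatic number. The inequality $\alpha(X\times Y)\ge\max\{\alpha(X)|Y|,\alpha(Y)|X|\}$ is the easy direction already noted in the excerpt: if $S$ is a maximum independent set of $X$ then $S\times V(Y)$ is independent in $X\times Y$, and symmetrically. So the whole content lies in the reverse inequality
\[
\alpha(X\times Y)\ \le\ \max\{\alpha(X)|Y|,\ \alpha(Y)|X|\}.
\]

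First I would record two structural facts. Since $X$ and $Y$ are vertex-transitive, so is $X\times Y$: the group $\mathrm{Aut}(X)\times\mathrm{Aut}(Y)$ acts as automorphisms of $X\times Y$ coordinate-wise, and this action is transitive on $V(X)\times V(Y)$. Second, for every vertex-transitive graph $Z$ one has the classical identity $\alpha(Z)\,\chi_f(Z)=|V(Z)|$. Applying this to $X$, $Y$ and $X\times Y$ converts the target inequality into the single statement
\[
\chi_f(X\times Y)\ \ge\ \min\{\chi_f(X),\ \chi_f(Y)\},
\]
because $\alpha(X\times Y)=|X||Y|/\chi_f(X\times Y)$ while $\max\{\alpha(X)|Y|,\alpha(Y)|X|\}=|X||Y|/\min\{\chi_f(X),\chi_f(Y)\}$.

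The companion inequality $\chi_f(X\times Y)\le\min\{\chi_f(X),\chi_f(Y)\}$ is immediate and simply re-encodes the easy direction: the two coordinate projections $X\times Y\to X$ and $X\times Y\to Y$ are graph homomorphisms, since adjacency in the direct product forces adjacency in each coordinate, and $\chi_f$ is monotone under homomorphisms because a fractional coloring of the target pulls back along a homomorphism to a fractional coloring of the source of the same total weight.

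The hard part will be the lower bound $\chi_f(X\times Y)\ge\min\{\chi_f(X),\chi_f(Y)\}$; this is exactly the fractional analogue of Hedetniemi's conjecture, and it is where the vertex-transitivity hypothesis must be used in an essential way. I would attack it through linear-programming duality: $\chi_f(Z)$ equals the optimum of the fractional clique-cover linear program, so it suffices to produce, from a fractional clique cover of whichever factor attains the minimum (say $X$), a fractional clique cover of $X\times Y$ of the same total weight. The relevant cliques of $X\times Y$ are the \emph{diagonal} cliques lying over a clique of $X$ and a clique of $Y$, and the construction must spread the weight using the vertex-transitivity of $Y$, averaging over $\mathrm{Aut}(Y)$ so that every fibre is covered uniformly. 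Exhibiting and verifying this dual certificate is the crux of the argument and the step I expect to demand the most care; it is precisely the content established by Zhang.
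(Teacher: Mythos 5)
The first thing to note is that the paper contains no proof of this statement at all: it is quoted from Zhang~\cite{Zhang2012832} as a known theorem, so your attempt has to be judged as a standalone argument rather than against an in-paper proof. The reductions you carry out are correct as far as they go: $X\times Y$ is vertex-transitive because $\mathrm{Aut}(X)\times\mathrm{Aut}(Y)$ acts coordinate-wise and transitively; the identity $\alpha(Z)\,\chi_f(Z)=|V(Z)|$ for vertex-transitive $Z$ is classical; the algebra converting the theorem into the single inequality $\chi_f(X\times Y)\ge\min\{\chi_f(X),\chi_f(Y)\}$ is right; and the companion upper bound via the two projection homomorphisms and monotonicity of $\chi_f$ is fine.

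But the proposal stops exactly where the theorem begins, and the sketch offered for the remaining step is flawed in two concrete ways. First, the inequality $\chi_f(X\times Y)\ge\min\{\chi_f(X),\chi_f(Y)\}$ is the fractional Hedetniemi identity, and you do not prove it; you concede it is ``precisely the content established by Zhang,'' which makes the proposal a reformulation rather than a proof. Second, the LP-duality plan points the wrong way: producing a fractional cover of $X\times Y$ by independent sets (what you call a clique cover) from one of $X$ certifies the \emph{upper} bound $\chi_f(X\times Y)\le\chi_f(X)$, which you already have from the projections; the needed \emph{lower} bound requires a feasible solution of the dual LP, namely a fractional clique of $X\times Y$ of total weight at least $\min\{\chi_f(X),\chi_f(Y)\}$, built from optimal fractional cliques of the two factors. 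That construction (Zhu's max/min weighting on pairs of cliques) is the genuinely hard combinatorial step, and it does not come from averaging over $\mathrm{Aut}(Y)$; indeed your claim that vertex-transitivity ``must be used in an essential way'' there is false, since Zhu proved $\chi_f(X\times Y)=\min\{\chi_f(X),\chi_f(Y)\}$ for \emph{all} graphs (European J.\ Combin.\ 32 (2011), 1168--1175), and transitivity enters the present theorem only through the identity $\alpha\cdot\chi_f=|V|$. If you are permitted to cite Zhu's theorem as a black box, your reduction does become a complete and clean derivation of the stated equality; as a self-contained proof it has a genuine gap at its center.
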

The following can, then, be easily derived.
\begin{cor}\label{alpha_of_direct_prod_of_graphs}
If $X_1,\dots,X_k$ are vertex-transitive graphs, then 
\[
\alpha(X_1\times \cdots\times X_k)=\max_i\{\,\alpha(X_i)\prod_{\substack{j=1,\dots,n\\ j\neq i}}|V(X_j)|\,\}.\qed
\]
\end{cor}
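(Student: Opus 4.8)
The plan is to derive Corollary~\ref{alpha_of_direct_prod_of_graphs} from the preceding two-factor theorem by induction on $k$, using the fact that the direct product of vertex-transitive graphs is again vertex-transitive. The base case $k=1$ is trivial, since the right-hand side reduces to $\alpha(X_1)$ (the empty product of cardinalities is $1$), and the case $k=2$ is precisely the stated theorem. So the real content is the inductive step, and the only structural fact I need to supply is that vertex-transitivity is preserved under taking direct products.

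First I would record that if $X$ and $Y$ are vertex-transitive, then so is $X\times Y$: given automorphisms $\varphi$ of $X$ and $\psi$ of $Y$, the map $(x,y)\mapsto(\varphi(x),\psi(y))$ is an automorphism of $X\times Y$, and by transitivity of the two factors one can send any vertex $(x_1,y_1)$ to any other $(x_2,y_2)$. Hence $Y_k := X_1\times\cdots\times X_{k-1}$ is vertex-transitive whenever each $X_i$ is, and $|V(Y_k)| = \prod_{j=1}^{k-1}|V(X_j)|$.

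Next I would apply the two-factor theorem to the pair $Y_k$ and $X_k$, giving
\[
\alpha(X_1\times\cdots\times X_k)
  = \max\{\,\alpha(Y_k)\,|V(X_k)|\,,\,\alpha(X_k)\,|V(Y_k)|\,\}.
\]
By the induction hypothesis $\alpha(Y_k) = \max_{i<k}\{\alpha(X_i)\prod_{j<k,\,j\neq i}|V(X_j)|\}$, so the first term inside the maximum becomes $\max_{i<k}\{\alpha(X_i)\prod_{j\le k,\,j\neq i}|V(X_j)|\}$ after multiplying through by $|V(X_k)|$. The second term is exactly the $i=k$ contribution to the desired maximum. Combining the two recovers $\max_{1\le i\le k}\{\alpha(X_i)\prod_{j\neq i}|V(X_j)|\}$, which closes the induction.

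The step I expect to require the most care is purely bookkeeping rather than conceptual: correctly absorbing the factor $|V(X_k)|$ into the inductive maximum so that the product ranges over all $j\neq i$ with $j\le k$, and confirming that the $i=k$ term from the second half of the two-factor maximum slots in cleanly. (I would also flag the minor typo in the corollary statement, where the product index reads $j=1,\dots,n$ but should read $j=1,\dots,k$.) No genuine obstacle arises beyond this, since associativity of the direct product up to isomorphism lets me group the first $k-1$ factors without affecting the independence number.
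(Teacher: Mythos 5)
Your proof is correct and is exactly the derivation the paper intends: the paper leaves the corollary as an easy consequence of Zhang's two-factor theorem, and the standard route is precisely your induction on $k$ using the (correctly verified) fact that direct products preserve vertex-transitivity. You are also right that the index $n$ in the corollary's product should read $k$.
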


\begin{thm}\label{EKR_for_internal_prod} 
  With the notation above, if all the groups $G_i$ have the EKR property,
  then $G$ also has the EKR property.
\end{thm}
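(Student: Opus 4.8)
The plan is to reduce the statement for the internal direct product to the formula for the independence number of a direct product of vertex-transitive graphs, which is exactly the content of Corollary~\ref{alpha_of_direct_prod_of_graphs}. By Equation~(\ref{gamma_of_internal_prod}) we know that $\Gamma_G=\Gamma_{G_1}\times\cdots\times\Gamma_{G_k}$, and each $\Gamma_{G_i}$ is a normal Cayley graph, hence vertex-transitive. Therefore Corollary~\ref{alpha_of_direct_prod_of_graphs} applies and gives
\[
\alpha(\Gamma_G)=\max_i\Big\{\,\alpha(\Gamma_{G_i})\prod_{j\neq i}|G_j|\,\Big\}.
\]
So the first concrete step is to record that $|V(\Gamma_{G_i})|=|G_i|$ and write out this expression explicitly.

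Next I would bring in the EKR hypothesis on each factor. Since each $G_i$ has the EKR property, $\alpha(\Gamma_{G_i})$ equals the size of the largest point-stabilizer in $G_i$; write $m_i$ for this maximum stabilizer size, so $\alpha(\Gamma_{G_i})=m_i$ and the above becomes $\alpha(\Gamma_G)=\max_i\{\,m_i\prod_{j\neq i}|G_j|\,\}$. The remaining task is to identify this quantity with the size of the largest point-stabilizer in $G$ itself. Here the key observation is how stabilizers behave under the internal direct product: a point $x\in\Omega$ lies in exactly one block $\Omega_i$, and an element $g_1g_2\cdots g_k$ fixes $x$ precisely when its $i$-th component $g_i$ fixes $x$ while the other components are unconstrained. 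Hence
\[
G_x=(G_1)\cdots(G_{i-1})\,(G_i)_x\,(G_{i+1})\cdots(G_k),
\]
so $|G_x|=|(G_i)_x|\prod_{j\neq i}|G_j|$. Taking the maximum over all $x$ (equivalently, over the index $i$ of the block and over the choice of point within that block) yields $\max_x|G_x|=\max_i\{\,m_i\prod_{j\neq i}|G_j|\,\}$.

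Comparing the two displayed expressions shows that $\alpha(\Gamma_G)$ equals the largest point-stabilizer size in $G$, which is exactly the EKR property for $G$. I would state this conclusion directly.

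The genuine mathematical content has already been imported as Corollary~\ref{alpha_of_direct_prod_of_graphs} (ultimately Zhang's theorem on independence numbers of direct products of vertex-transitive graphs), so the step I expect to demand the most care is the bookkeeping of stabilizers: one must be sure that the maximum of $|(G_i)_x|\prod_{j\neq i}|G_j|$ over all points $x$ really matches the max appearing in the corollary, and in particular that maximizing over blocks and over points within a block together reproduce exactly the index $i$ achieving the maximum. This is elementary but is the one place where a careless identification could go wrong. Notably, this argument establishes only the EKR property and not the strict EKR property; unlike the external product case of Theorem~\ref{EKR_for_external_prod}, the maximum independent sets of a direct product of vertex-transitive graphs need not all be ``projection-type'' sets, so I would not attempt to claim the strict version here.
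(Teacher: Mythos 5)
Your proposal is correct and follows essentially the same route as the paper's proof: apply Equation~(\ref{gamma_of_internal_prod}) together with Corollary~\ref{alpha_of_direct_prod_of_graphs} (Zhang's theorem) to get $\alpha(\Gamma_G)=\max_i\{\alpha(\Gamma_{G_i})\prod_{j\neq i}|G_j|\}$, then match this against the stabilizer computation $|G_x|=|(G_i)_x|\prod_{j\neq i}|G_j|$ for $x\in\Omega_i$. Your closing caution about the strict version is also consistent with the paper, which notes via Theorem~\ref{strict_EKR_for_Young} that the strict EKR property does not carry over to internal direct products.
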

\begin{proof}
  For any $x\in \Omega$, the stabilizer of $x$ in $G$ is
  $G_1\cdot\cdots\cdot G_{j-1}\cdot (G_j)_x\cdot G_{j+1}\cdot
  \cdots\cdot G_k$, where $x\in \Omega_j$. Hence
\[
|G_x|=|(G_j)_x|\prod_{\substack{i=1,\dots,n\\ i\neq j}}|G_i|.
\]
According to Corollary~\ref{alpha_of_direct_prod_of_graphs}
\[
\alpha(\Gamma_G)=\max_j\{\,\alpha(\Gamma_{G_j})\prod_{\substack{i=1,\dots,n\\ i\neq j}}|G_i|\,\}.
\]
Therefore, if all the
groups $G_i$ have the EKR property, then $G$ also has the EKR property.
\end{proof}

We point out that the converse of Theorem~\ref{EKR_for_internal_prod}
does not hold, examples of this are given in
Section~\ref{non-examples}.  Further, the ``strict''version of this
result does not hold.  In fact, Theorem~\ref{strict_EKR_for_Young}
gives examples of groups, which are the internal direct product of
groups that all have the strict EKR property, but do not satisfy the
strict EKR property.

Using Theorem~\ref{cyclic_groups}, one can observe the following.

\begin{cor} 
  For any sequence $r_1,\dots,r_k$ of positive integers, the internal
  direct product
  $\mathbb{Z}_{r_1}\cdot\mathbb{Z}_{r_2}\cdot\cdots\cdot
  \mathbb{Z}_{r_k}$ has the EKR property.\qed
\end{cor}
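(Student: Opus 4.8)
The plan is to obtain this as an immediate consequence of the two preceding results, Theorem~\ref{cyclic_groups} and Theorem~\ref{EKR_for_internal_prod}, so the real work is only to set up the objects so that both theorems literally apply. First I would realize each factor as a genuine permutation group: choose pairwise disjoint nonempty sets $\Omega_1,\dots,\Omega_k$ with $|\Omega_i|=r_i$, and let $\mathbb{Z}_{r_i}\leq\sym(\Omega_i)$ be the cyclic group generated by an $r_i$-cycle on $\Omega_i$, acting in the natural way. With these choices, $G=\mathbb{Z}_{r_1}\cdot\mathbb{Z}_{r_2}\cdot\cdots\cdot\mathbb{Z}_{r_k}$ is precisely the internal direct product defined at the start of this section, with $G_i=\mathbb{Z}_{r_i}$ and $\Omega=\Omega_1\cup\cdots\cup\Omega_k$.

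Next I would invoke Theorem~\ref{cyclic_groups}: since each $G_i$ is the cyclic group generated by a single permutation (here an $r_i$-cycle), it has the strict EKR property, and therefore the EKR property. This supplies the hypothesis needed for the product theorem.

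Finally, with every factor $G_i$ having the EKR property, Theorem~\ref{EKR_for_internal_prod} applies directly and gives that $G$ has the EKR property, which is exactly the assertion of the corollary.

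I do not anticipate any substantive obstacle, as both inputs are already in hand; the only point requiring a little care is the bookkeeping in the first step, namely presenting the $\mathbb{Z}_{r_i}$ as permutation groups on \emph{disjoint} point sets so that the internal direct product construction is genuinely in force. Note also that the strict EKR property of the factors does \emph{not} upgrade the conclusion to strict EKR for $G$: the paper explicitly warns that the strict version of Theorem~\ref{EKR_for_internal_prod} fails, so the corollary can only claim the (non-strict) EKR property.
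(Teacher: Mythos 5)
Your proposal is correct and matches the paper's intended argument exactly: the corollary is stated as an immediate consequence of Theorem~\ref{cyclic_groups} (each $\mathbb{Z}_{r_i}$, realized as the group generated by an $r_i$-cycle on its own point set $\Omega_i$, has the strict and hence the EKR property) combined with Theorem~\ref{EKR_for_internal_prod}. Your added care about placing the factors on disjoint point sets, and your remark that strictness does not transfer, are both consistent with the paper's setup and its explicit warning after Theorem~\ref{EKR_for_internal_prod}.
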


Let $\lambda=[\lambda_1,\dots,\lambda_k]$ be a partition of $n$; that
is, $\lambda_1\geq \lambda_2\geq\cdots\geq\lambda_k\geq 1$ and
$\sum_i\lambda_i=n$. Define a set partition of $\{1,2,\dots,n\}$ by
$\{1,2,\dots,n\}=\Omega_1\cup\cdots\cup\Omega_k$, where $\Omega_i=\{\lambda_1+
\cdots+\lambda_{i-1}+1,\dots,\lambda_1+\cdots+\lambda_i\}$. Then the
internal direct product $\sym(\Omega_1)\cdot
\sym(\Omega_2)\cdot\cdots\cdot \sym(\Omega_k)$ is called the
\textsl{Young subgroup} of $\sym(n)$ corresponding to $\lambda$ and is
denoted by $\sym(\lambda)$. An easy consequence of
Theorem~\ref{EKR_for_internal_prod} and Theorem~\ref{EKR_for_sym} is
the following.

\begin{cor} 
Any Young subgroup has the EKR property.\qed
\end{cor}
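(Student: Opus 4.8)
The plan is to read off the result directly from the machinery already built for internal direct products. By the very definition given just above the statement, the Young subgroup $\sym(\lambda)$ is the internal direct product $\sym(\Omega_1)\cdot\sym(\Omega_2)\cdots\sym(\Omega_k)$, where the blocks $\Omega_1,\dots,\Omega_k$ are pairwise disjoint (which is exactly the hypothesis needed for the internal direct product and its action to be well defined). So the whole task reduces to checking that each factor $\sym(\Omega_i)$, acting naturally on $\Omega_i$, has the EKR property; once that is in hand, Theorem~\ref{EKR_for_internal_prod} applies verbatim.

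First I would observe that $\sym(\Omega_i)$ is simply a copy of the symmetric group on the $\lambda_i = |\Omega_i|$ points of $\Omega_i$, with its natural action. Whenever $\lambda_i \geq 2$, Theorem~\ref{EKR_for_sym} guarantees that $\sym(\Omega_i)$ has the strict EKR property, and in particular the (plain) EKR property that the present corollary requires. This covers all factors coming from parts of size at least two.

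The only subtlety, and the step I would flag as needing explicit care, is a part $\lambda_i = 1$: Theorem~\ref{EKR_for_sym} is stated only for $n \geq 2$, and here $\sym(\Omega_i)$ is the trivial group $\{\id\}$. This case is dispatched by inspection rather than by the cited theorem: the derangement graph of the trivial group is a single isolated vertex, the unique intersecting set is $\{\id\}$, and this is exactly the (full) stabilizer of the single point of $\Omega_i$, so the EKR property holds trivially. Hence every factor $\sym(\Omega_i)$ has the EKR property, irrespective of the size of its part.

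With all factors confirmed to have the EKR property, Theorem~\ref{EKR_for_internal_prod} immediately yields that $\sym(\lambda)$ has the EKR property, which completes the argument. There is no real obstacle beyond the trivial-factor bookkeeping noted above; the heavy lifting has already been done in Theorem~\ref{EKR_for_sym} (the strict EKR property of $\sym(n)$) and in the independence-number computation for direct products of vertex-transitive graphs underlying Theorem~\ref{EKR_for_internal_prod}.
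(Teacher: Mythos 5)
Your proposal is correct and takes essentially the same route as the paper, which derives the corollary directly from Theorem~\ref{EKR_for_internal_prod} applied to the factors $\sym(\Omega_i)$, each of which has the (strict) EKR property by Theorem~\ref{EKR_for_sym}. Your explicit treatment of parts of size one (where $\sym(\Omega_i)$ is trivial and Theorem~\ref{EKR_for_sym}, stated for $n\geq 2$, does not literally apply) is a small bookkeeping point the paper leaves implicit, and you resolve it correctly by inspection.
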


It is not difficult to see that $\Gamma_{\sym(n)}$ is connected if and
only if $n\neq 3$, the graph $\Gamma_{\sym(3)}$ is the disjoint union
of two complete graphs $K_3$ and $\Gamma_{\sym([2,2,2])}$ is
disconnected. From this we can deduce that if $\lambda=[3,2,\dots,2],
[3,3]$ or $[2,2,2]$, then $\Gamma_{\sym(\lambda)}$ will be
disconnected and one can find maximum independent sets which do not
correspond to cosets of point-stabilizers. More generally, if
$\lambda$ is any partition of $n$ which ``ends'' with one of these
three cases, then $\sym(\lambda)$ fails to have the strict EKR
property. Ku and Wong~\cite{KuW07} prove that these are the only Young
subgroups which don't have the strict EKR property. In other words,
they have proved the following.

\begin{thm}\label{strict_EKR_for_Young}
  Let $\lambda=[\lambda_1,\dots,\lambda_k]$ be a partition of $n$ with
  all parts larger than one. Then $\sym(\lambda)$ has the strict EKR
  property unless one of the following hold
\begin{enumerate}[(a)]
\item $\lambda_j=3$ and $\lambda_{j+1}=\cdots= \lambda_k= 2$, for some $1\leq j<k$;
\item $\lambda_k = \lambda_{k-1} = 3$;
\item $\lambda_k = \lambda_{k-1} = \lambda_{k-2} = 2$. \qed
\end{enumerate}
\end{thm}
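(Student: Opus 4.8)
The Erd\H{o}s--Ko--Rado property itself is immediate: each symmetric group $\sym(\lambda_i)$ has the strict EKR property by Theorem~\ref{EKR_for_sym}, so Theorem~\ref{EKR_for_internal_prod} already gives that $\sym(\lambda)$ has the EKR property. All of the difficulty is in the strict statement, and the plan is to analyse the maximum independent sets of $\Gamma_{\sym(\lambda)}$ directly. By~(\ref{gamma_of_internal_prod}) this graph is the direct product $\Gamma_{\sym(\lambda_1)}\times\cdots\times\Gamma_{\sym(\lambda_k)}$, so I would first record two numerical facts. Any clique of $\Gamma_{\sym(\lambda)}$ projects injectively to a clique in each factor, and each factor has clique number $\lambda_i$ (a sharply transitive set, e.g.\ the one generated by a $\lambda_i$-cycle as in Theorem~\ref{cyclic_groups}); synchronising such cycles gives a clique of size $\lambda_k$, so $\omega(\Gamma_{\sym(\lambda)})=\lambda_k$, the smallest part. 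On the other hand, Corollary~\ref{alpha_of_direct_prod_of_graphs} together with $\alpha(\Gamma_{\sym(\lambda_i)})=(\lambda_i-1)!$ gives $\alpha(\Gamma_{\sym(\lambda)})=|G|/\lambda_k$. Hence $\omega\cdot\alpha=|G|$, the clique--coclique bound (Lemma~\ref{clique_coclique_bound}) is tight, and every maximum independent set meets every maximum clique in exactly one vertex. This rigidity is the engine of the proof, exactly as in the cyclic case.

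The next step is to locate where the maximum independent sets can live. Writing $m=\lambda_k$, the quantity $\alpha(\Gamma_{\sym(\lambda_i)})\prod_{j\ne i}|\sym(\lambda_j)|=|G|/\lambda_i$ attains its maximum $|G|/m$ precisely at the coordinates $i$ with $\lambda_i=m$; call these the \emph{dominant} coordinates and let $t$ be their number. The \emph{canonical} maximum independent sets --- the cosets of the largest point-stabilizers --- are exactly those obtained by fixing a coset of a point-stabilizer of $\sym(\Omega_i)$ in one dominant coordinate and taking the whole of $\sym(\Omega_j)$ in every other coordinate; here one uses the strict EKR property of $\sym(m)$. Thus the strict EKR property for $\sym(\lambda)$ is equivalent to the assertion that \emph{no non-canonical} maximum independent set exists.

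The heart of the matter is therefore a structure theorem for the maximum independent sets of the direct product, and this is where I expect the real work to be. I would organise it by splitting off the dominant coordinates, writing $G=H\cdot K$ where $K$ is the internal product of the $t$ factors with $\lambda_i=m$ and $H$ collects the larger parts. The decisive dichotomy is connectivity and bipartiteness of the factors, since in a direct product these control how independent sets may be mixed across components: for $\ell\ge 4$ the graph $\Gamma_{\sym(\ell)}$ is connected and non-bipartite (it contains the clique $K_\ell$), whereas $\Gamma_{\sym(2)}=K_2$ is bipartite and $\Gamma_{\sym(3)}=2K_3$ is disconnected. When $\Gamma_H$ is connected and non-bipartite and the dominant tail is suitably rigid, the tightness of the clique--coclique bound should force canonicality, either by mimicking the sliding-clique propagation of Theorem~\ref{cyclic_groups} over a family of maximum cliques, or by showing each connected component of the product is a bipartite double cover (or an analogue) whose maximum independent sets are unique. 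The hard part will be making this rigidity uniform over all $\lambda$ and, above all, controlling the degenerate factors $\sym(2)$ and $\sym(3)$ when they appear among or just above the dominant coordinates.

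Finally I would isolate exactly the configurations where this rigidity fails, and these turn out to be precisely cases (a)--(c). The mechanism is that the product of the relevant factors becomes disconnected with \emph{more} combinatorial freedom than the point-stabilizer cosets can absorb, so one may choose incompatible transversals on different components and assemble a maximum independent set fixing no common point. Concretely $\Gamma_{\sym([3,2])}$ is a disjoint union of two $6$-cycles, $\Gamma_{\sym([2,2,2])}$ is $4K_2$, and $\Gamma_{\sym([3,3])}$ splits into four copies of $\Gamma_{\sym(3)}\times\Gamma_{\sym(3)}$; in each case one exhibits a cross-component combination that is not a coset of a point-stabilizer. By contrast, for a single trailing pattern such as $[4,3]$ or $[4,2,2]$ the cross-component combinations reproduce exactly the coset structure, so no defect appears. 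The characterisation is then completed by checking that any partition \emph{ending} in one of the three patterns inherits the defect while every other partition does not. Throughout, the genuine obstacle is the direct-product structure theorem for maximum independent sets; everything else is either numerical or a direct small-case verification.
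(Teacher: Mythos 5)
Your numerical framework is sound and matches what can be assembled from the paper's own tools: $\omega(\Gamma_{\sym(\lambda)})=\lambda_k$, $\alpha(\Gamma_{\sym(\lambda)})=|\sym(\lambda)|/\lambda_k$ via Corollary~\ref{alpha_of_direct_prod_of_graphs}, tightness of the clique--coclique bound, and the identification of the canonical maximum independent sets as cosets of stabilizers of points in the smallest blocks. Your analysis of the exceptional cases is also essentially correct, modulo one slip: $\Gamma_{\sym([3,3])}$ \emph{is} $\Gamma_{\sym(3)}\times\Gamma_{\sym(3)}$; what splits into four components, each isomorphic to $K_3\times K_3$, is that graph itself, not four copies of it. The genuine gap is the converse direction, which is the entire content of the theorem: that every partition avoiding (a)--(c) has the strict EKR property. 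This is exactly the step you defer, and your plan for it does not go through as stated. Tightness of the clique--coclique bound together with connectivity and non-bipartiteness of the factors is not known to force maximum independent sets of a direct product to be ``canonical'': Zhang's theorem \cite{Zhang2012832}, the only product result available here, determines the independence \emph{number} only and says nothing about the structure of the maximum independent sets. The sliding-clique argument of Theorem~\ref{cyclic_groups} exploits the very special chain of maximum cliques $C_i$, $C_{i+1}$ differing in one vertex inside a cyclic group, and you offer no analogue of such a chain in $\Gamma_{\sym(\lambda_1)}\times\cdots\times\Gamma_{\sym(\lambda_k)}$; ``either by mimicking the sliding-clique propagation or by showing each component is a bipartite double cover'' names two hopes, not an argument. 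Likewise the claim that for $[4,3]$ or $[4,2,2]$ ``the cross-component combinations reproduce exactly the coset structure'' is asserted rather than verified --- and note that $\Gamma_{\sym([4,2,2])}$ is itself disconnected, so disconnectedness alone does not separate the good cases from the bad ones; a component-by-component structural analysis is unavoidable and must be carried out uniformly for all trailing patterns, not for two examples.

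For comparison, the paper does not prove this theorem at all: it proves only the direction you handle correctly --- the observation that $\Gamma_{\sym(3)}$ is a disjoint union of two triangles and $\Gamma_{\sym([2,2,2])}$ is disconnected, so any partition ending in one of the three listed patterns admits maximum independent sets mixing components that are not cosets of point-stabilizers --- and it cites Ku and Wong \cite{KuW07} for the converse, which is where the real work lies. So your proposal reconstructs everything the paper actually argues, plus a correct quantitative framework, but the missing structure theorem for maximum independent sets in these direct products is precisely the substantial result the paper outsources, and it cannot be recovered from the lemmas available in this paper.
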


\section{The EKR property for wreath products}

Let $G\leq \sym(m)$ and $H\leq \sym(n)$. Then the \textsl{wreath
  product} of $G$ and $H$, denoted by $G\wr H$ is the group whose set
of elements is
\[
(\underbrace{G\times \cdots\times G}_{n\,\,\text{times}})\times H,
\]
and the binary operation is defined as follows:
\[
(g_1,\dots,g_n,h)\cdot (g'_1,\dots,g'_n,h'):= (g_1 g'_{h(1)},\dots,g_n g'_{h(n)}\,,\,hh').
\]
In particular, note that the identity element of $G\wr H$ is
$(\id_G,\dots,\id_G,\id_H)$ and for any $(g_1,\dots,g_n,h)\in G\wr H$,
\[
(g_1,\dots,g_n,h)^{-1} = (g_{h^{-1}(1)}^{-1},\dots,g_{h^{-1}(n)}^{-1},h^{-1}).
\]
Note also that the size of $G\wr H$ is $|G|^n|H|$. 
We point out that  $G\wr H$ is in fact the ``semi-direct product''
\[
(\underbrace{G\times \cdots\times G}_{n\,\,\text{times}})\rtimes H,
\]
and the action of $H$ on $G\times \cdots\times G$ is
simply permuting the positions of copies of $G$ (see \cite[Section
2.5]{dixon2012permutation} for a more detailed discussion on
semi-direct products). It is not hard to see that this group is the
stabilizer of a partition of the set $\{1,2,\dots, nm\}$ into $n$ parts each of
size $m$.

Similar to what we have done in the previous sections, we will
describe the derangement graph $\Gamma_{G \wr H}$ as a subgraph of a
graph product of $\Gamma_G$ and $\Gamma_H$. In this case, we consider
the \textsl{lexicographic product} of graphs. Let $X$ and $Y$ be two
graphs. Then the \textsl{lexicographical product} $X[Y]$ is a graph
with vertex set $V(X)\times V(Y)$ in which two vertices
$(x_1,y_1),(x_2,y_2)$ are adjacent if and only if $x_1\sim x_2$ in $X$
or $x_1=x_2$ and $y_1\sim y_2$ in $Y$. An easy interpretation of
$X[Y]$ is as follows: to construct $X[Y]$, replace any vertex of $X$
with a copy of $Y$, and if two vertices $x_1$ and $x_2$ in $X$ are
adjacent, then in $X[Y]$ all the vertices which replaced the vertex
$x_1$ will be adjacent to all the vertices which replaced the vertex
$x_2$.

Note that if $S_X$ and $S_Y$ are independent sets in $X$ and $Y$,
respectively, then the $S_X[S_Y]$ is an independent set of vertices of
$X[Y]$. It is straight-forward to determine the size of the maximum
independent sets in a lexicographic product, see~\cite{Geller197587}
for details.

\begin{prop}\label{independence-of-lex} Let $X$ and $Y$ be graphs. Then 
\[
\alpha(X[Y])=\alpha(X)\alpha(Y).\qed
\]
\end{prop}

For any $x\in V(X)$, let $Y_x=\{x\}\times Y$ (this is the copy of $Y$
that replaces $x$ in $X$). If $S$ is a subset of the vertices of $X[Y]$,
define the projection of $S$ to $X$ as
\[
\proj_X(S)=\{x\in X\,\,|\,\, (x,y)\in S,\, \text{for some}\, y\in Y\}.
\]
Similarly, for any $x\in V(X)$ we define the projection of $S$ to $Y_x$ as
\[
\proj_{Y_x}(S)=\{y\in Y\,\,|\,\,(x,y) \in S\}.
\]
We can, then, observe the following.
\begin{prop}\label{max_indy_in_lex} 
  Let $X$ and $Y$ be graphs. If $S$ is an independent set in $X[Y]$ of
  size $\alpha(X)\alpha(Y)$, then $\proj_X(S)$ is a maximum
  independent set in $X$ and, for any $x\in V(X)$, the set $\proj_{Y_x}(S)$ is
  a maximum independent set in $Y_x$.\qed
\end{prop}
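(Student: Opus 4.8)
The plan is to run a double-counting argument that squeezes both projections to maximum size simultaneously, using the size hypothesis $|S| = \alpha(X)\alpha(Y)$ together with the fact (Proposition~\ref{independence-of-lex}) that this is exactly $\alpha(X[Y])$, so that $S$ is a maximum independent set in $X[Y]$.

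First I would establish the two basic bounds coming from the adjacency rule in $X[Y]$. For the base, suppose $x_1, x_2 \in \proj_X(S)$ are distinct with $x_1 \sim x_2$ in $X$; picking witnesses $(x_1,y_1),(x_2,y_2)\in S$, the definition of the lexicographic product makes these adjacent in $X[Y]$, contradicting independence of $S$. Hence $\proj_X(S)$ is independent in $X$, so $|\proj_X(S)| \leq \alpha(X)$. Similarly, for a fixed $x$, if $y_1, y_2 \in \proj_{Y_x}(S)$ with $y_1 \sim y_2$ in $Y$, then $(x,y_1),(x,y_2)\in S$ are adjacent in $X[Y]$ (same first coordinate, adjacent second coordinates), again a contradiction; thus each $\proj_{Y_x}(S)$ is independent in $Y$, giving $|\proj_{Y_x}(S)| \leq \alpha(Y)$.

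Next I would observe that $S$ decomposes as the disjoint union $S = \bigcup_{x \in \proj_X(S)} \{x\}\times \proj_{Y_x}(S)$, where the union is over distinct $x$ and the pieces are disjoint. Counting gives
\[
\alpha(X)\alpha(Y) = |S| = \sum_{x\in\proj_X(S)} |\proj_{Y_x}(S)| \leq |\proj_X(S)|\,\alpha(Y) \leq \alpha(X)\alpha(Y).
\]
Since the two ends agree, both inequalities are equalities: the second forces $|\proj_X(S)| = \alpha(X)$, so $\proj_X(S)$ is a maximum independent set in $X$; and equality in the first, combined with each summand being at most $\alpha(Y)$, forces $|\proj_{Y_x}(S)| = \alpha(Y)$ for every $x \in \proj_X(S)$, so each such $\proj_{Y_x}(S)$ is a maximum independent set in $Y$ (equivalently in the copy $Y_x$).

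There is essentially no deep obstacle here; the argument is a clean ``squeeze.'' The only point requiring care is the phrasing ``for any $x\in V(X)$'': the conclusion about $\proj_{Y_x}(S)$ is meaningful precisely for $x\in\proj_X(S)$, since for $x\notin\proj_X(S)$ the fiber $\proj_{Y_x}(S)$ is empty. I would therefore state the second conclusion for $x\in\proj_X(S)$ (or note that the empty fibers contribute nothing and play no role once $|\proj_X(S)|=\alpha(X)$ is established).
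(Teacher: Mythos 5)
Your proof is correct and complete; the paper states this proposition without proof (it is offered as an observation with a terminal \qed), and your squeeze argument --- independence of both projections via the adjacency rule of $X[Y]$, then the fiber decomposition $S=\bigcup_{x\in\proj_X(S)}\{x\}\times\proj_{Y_x}(S)$ forcing equality throughout $|S|\leq|\proj_X(S)|\,\alpha(Y)\leq\alpha(X)\alpha(Y)$ --- is exactly the standard argument the authors evidently had in mind. You are also right to flag the quantifier: as literally stated, ``for any $x\in V(X)$'' fails whenever $\alpha(X)<|V(X)|$, since the fiber over $x\notin\proj_X(S)$ is empty; restricting the second conclusion to $x\in\proj_X(S)$ is the correct reading, and your proof establishes precisely that.
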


Assume $\Omega= \{1, \dots, m\} \times \{1,\dots ,n\}$, then the group $G\wr H$ acts on
$\Omega$ in the following fashion:
\begin{equation}\label{wreath_action_eq}
(x,j)^{(g_1,\dots,g_n,h)}:=(x^{g_j},j^h)=(g_j(x), h(j)),
\end{equation}
for any $(x,j)\in \Omega$ and $(g_1,\dots,g_n,h)\in G\wr H$. If
$(g_1,\dots,g_n,h)$ has a fixed point $(x,j)$, then $h(j)=j$ and
$g_j(x)=x$. Thus, it is not difficult to verify the following.
\begin{lem}\label{wreath_stabilizer}
For any pair $(x,j)\in \Omega$, the stabilizer of $(x,j)$ in $G\wr H$ is 
\[
\left(G\times\cdots \times\underset{j\text{th position}}{(G)_x}\times\cdots\times G\right)\times H_j.\qed
\]
\end{lem}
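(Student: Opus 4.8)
The plan is to read the stabilizer directly off the action defined in Equation~(\ref{wreath_action_eq}). Fix a pair $(x,j)\in\Omega$ and take an arbitrary element $(g_1,\dots,g_n,h)\in G\wr H$. By Equation~(\ref{wreath_action_eq}) we have $(x,j)^{(g_1,\dots,g_n,h)}=(g_j(x),h(j))$, so this element fixes $(x,j)$ if and only if $g_j(x)=x$ and $h(j)=j$; equivalently, if and only if $g_j\in (G)_x$ and $h\in H_j$. No constraint at all is placed on the remaining coordinates $g_i$ with $i\neq j$, so each of these may range freely over $G$. Reading this off coordinate by coordinate already exhibits the stabilizer, as a set, as
\[
\left(G\times\cdots\times (G)_x\times\cdots\times G\right)\times H_j,
\]
with $(G)_x$ occupying the $j$th position, which is precisely the claim.

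The first step is entirely mechanical, so the only point that I would treat with a moment's care is checking that this set really is closed under the twisted multiplication of the wreath product. (It must be, since any point-stabilizer is automatically a subgroup, but it is reassuring to verify it directly, since the wreath-product operation mixes the $G$-coordinates according to $h$.) Given two elements $(g_1,\dots,g_n,h)$ and $(g'_1,\dots,g'_n,h')$ of the displayed set, their product has $j$th coordinate $g_j\,g'_{h(j)}$. Because $h\in H_j$ fixes $j$, this equals $g_j\,g'_j$, which lies in $(G)_x$ as $(G)_x$ is a subgroup of $G$; the $H$-coordinate $hh'$ lies in $H_j$ for the same reason, and the coordinates away from the $j$th position remain unconstrained. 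Hence the displayed set is genuinely a subgroup, consistent with its being a point-stabilizer.

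I do not anticipate any real obstacle here: the whole content is an unwinding of the definition of the action, and the one spot where the nontrivial wreath-product multiplication might cause confusion — namely whether the $h$-twist could disturb the $j$th coordinate — is resolved exactly by the condition $h(j)=j$ that defines membership in $H_j$.
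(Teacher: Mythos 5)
Your proposal is correct and matches the paper's own (implicit) argument exactly: the paper proves this lemma by the one-line observation preceding it, namely that $(x,j)^{(g_1,\dots,g_n,h)}=(g_j(x),h(j))$ equals $(x,j)$ if and only if $g_j(x)=x$ and $h(j)=j$, which is your first paragraph. Your additional verification that the displayed set is closed under the twisted wreath-product multiplication is a harmless extra check that the paper omits (as it must hold automatically for any point-stabilizer).
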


\begin{thm}\label{EKR_for_wreath}
If $G\leq \sym(m)$ and $H\leq \sym(n)$ have the EKR property, then $G\wr H$ also has the EKR property.
\end{thm}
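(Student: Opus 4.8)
The plan is to bound the independence number of $\Gamma_{G\wr H}$ from above by the size of the largest point-stabilizer, and then compare this with the known stabilizer size from Lemma~\ref{wreath_stabilizer}. The natural strategy is to realize $\Gamma_{G\wr H}$ as a subgraph of some graph product of $\Gamma_G$ and $\Gamma_H$, mirroring the approach used for the external and internal direct products. Given that the section introduces the lexicographic product $X[Y]$ together with Proposition~\ref{independence-of-lex} ($\alpha(X[Y])=\alpha(X)\alpha(Y)$), the intended product is almost certainly lexicographic. So the first step is to identify $\Gamma_{G\wr H}$ with (a subgraph of) $\Gamma_H[\Gamma_G]$, where $H$ sits on the "outer" level and the $n$ copies of $G$ on the "inner" level.

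Let me think about what non-intersection means. Two elements $(g_1,\dots,g_n,h)$ and $(g_1',\dots,g_n',h')$ fail to intersect iff they agree nowhere on $\Omega$. Using the action~(\ref{wreath_action_eq}), agreement at $(x,j)$ forces $h(j)=h'(j)$ and $g_j(x)=g_j'(x)$ at the corresponding inner coordinate. **So I would first analyze the outer coordinate:** if $h$ and $h'$ already disagree at every $j$ (i.e. $h(h')^{-1}$ is a derangement, so $h\sim h'$ in $\Gamma_H$), the two wreath elements cannot agree anywhere and are adjacent. If instead $h$ and $h'$ agree at some block $j$, then non-intersection requires the inner permutations at those matched blocks to disagree everywhere, i.e. the relevant $G$-components must be adjacent in $\Gamma_G$. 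This is exactly the adjacency rule of the lexicographic product $\Gamma_H[\Gamma_G]$: adjacent on the outer graph, \emph{or} equal on the outer graph and adjacent on the inner graph. The subtlety is that $H$ may permute the inner coordinates (the semidirect action), so the matching of inner components is twisted by $h$; I expect this to make $\Gamma_{G\wr H}$ a \emph{subgraph} of $\Gamma_H[\Gamma_G]$ rather than an exact copy, which suffices for the upper bound $\alpha(\Gamma_{G\wr H})\ge\alpha(\Gamma_H[\Gamma_G])$ needs care — in fact a subgraph has \emph{larger} independence number, so I must argue in the other direction.

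**This reversal is the crux, and the main obstacle.** Because independence number is monotone the \emph{wrong} way under taking subgraphs, I cannot simply cite Proposition~\ref{independence-of-lex} on a supergraph. The clean fix is to build the upper bound directly using projections, exactly as the section's Propositions~\ref{max_indy_in_lex} anticipate. Given an independent set $S\subseteq G\wr H$, project to the $H$-coordinate: the image must be an independent set in $\Gamma_H$ (two elements of $S$ with non-intersecting $H$-parts would be fine, but two with \emph{intersecting} $H$-parts need their inner data to differ, which constrains fibers), so the image has size at most $\alpha(\Gamma_H)$. Then over each fiber of fixed outer behavior, the inner components at the matched block must form an independent set in $\Gamma_G$, bounding the fiber size by $\alpha(\Gamma_G)\cdot|G|^{n-1}|H|/(\text{something})$. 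Assembling these gives $\alpha(\Gamma_{G\wr H})\le \alpha(\Gamma_H)\,\alpha(\Gamma_G)\,|G|^{n-1}$.

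**Finally I would close the loop** by invoking the EKR hypotheses on $G$ and $H$. By assumption $\alpha(\Gamma_G)=\max_x|G_x|$ and $\alpha(\Gamma_H)=\max_j|H_j|$. Multiplying, and matching against Lemma~\ref{wreath_stabilizer} which gives $|(G\wr H)_{(x,j)}|=|G_x|\,|G|^{n-1}\,|H_j|$, I expect the upper bound to coincide exactly with the largest stabilizer, forcing equality and hence the EKR property for $G\wr H$. The matching lower bound (a stabilizer is an intersecting set) is immediate, so the two agree and the theorem follows. The part requiring the most attention is verifying that the $H$-twist on inner coordinates does not allow an independent set to exceed the projection bound; I would handle this by choosing, for each element of $S$, the block where its $h$-value is matched and tracking the inner component through the permutation $h$, showing the fiberwise independence survives the relabeling.
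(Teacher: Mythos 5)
Your overall strategy is the paper's: compare $\Gamma_{G\wr H}$ with the lexicographic product $\Gamma_H[\Gamma_P]$, where $P=G\times\cdots\times G$ is the internal direct product of the $n$ inner copies of $G$, bound the independence number by $\alpha(\Gamma_H)\alpha(\Gamma_P)$, and match against Lemma~\ref{wreath_stabilizer}. But you have the subgraph containment backwards, and this matters for how much work remains. Compute the quotient: $(g'_1,\dots,g'_n,h')(g_1,\dots,g_n,h)^{-1}=(g'_1g^{-1}_{h'h^{-1}(1)},\dots,g'_ng^{-1}_{h'h^{-1}(n)},h'h^{-1})$. If $h'h^{-1}$ is a derangement, this is fixed-point-free regardless of the inner parts; if $h'=h$, it is fixed-point-free exactly when every $g'_ig_i^{-1}$ is a derangement. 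Hence every edge of $\Gamma_H[\Gamma_P]$ is an edge of $\Gamma_{G\wr H}$: the lexicographic product is a \emph{spanning subgraph} of the wreath derangement graph, not the other way around. The extra edges of $\Gamma_{G\wr H}$ are pairs with $h\neq h'$ where $h'h^{-1}$ has fixed blocks but the inner parts are deranged on those blocks --- the ``twist'' you worried about only \emph{adds} edges. Since the containment goes this way, an independent set in $\Gamma_{G\wr H}$ is automatically independent in $\Gamma_H[\Gamma_P]$, and $\alpha(\Gamma_{G\wr H})\leq\alpha(\Gamma_H)\alpha(\Gamma_P)$ follows at once from Proposition~\ref{independence-of-lex}; the paper packages exactly this via the bijective homomorphism $f\colon\Gamma_H[\Gamma_P]\to\Gamma_W$ and the no-homomorphism lemma. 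The obstacle you flagged as ``the crux'' does not exist.

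Your fallback projection argument is the correct unpacking and does rescue the write-up, but as stated it has a gap at the fiber step. The outer projection is fine: two intersecting wreath elements agree at some $(x,j)$, which forces $h(j)=h'(j)$, so $\proj_{\Gamma_H}(S)$ is independent in $\Gamma_H$ and has size at most $\alpha(\Gamma_H)$. Within a fiber over a fixed $h$, however, there is no single ``matched block'': two elements of the fiber need only agree at \emph{some} block $j$, and that block varies from pair to pair, so the fiber is an independent set in the direct product $\Gamma_P=\Gamma_G\times\cdots\times\Gamma_G$, not in one copy of $\Gamma_G$. To convert the resulting bound $\alpha(\Gamma_P)$ into $\alpha(\Gamma_G)|G|^{n-1}$ --- your literal ``(something)'' --- you must invoke Theorem~\ref{EKR_for_internal_prod} (resting on Zhang's theorem, Corollary~\ref{alpha_of_direct_prod_of_graphs}): since $G$ has the EKR property, so does $P$, whence $\alpha(\Gamma_P)=|P_x|=|G_x|\,|G|^{n-1}$. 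This is a genuine ingredient, not bookkeeping: for general graphs $\alpha(X\times Y)$ can strictly exceed $\max\{\alpha(X)|Y|,\alpha(Y)|X|\}$, and vertex-transitivity of derangement graphs is what saves the equality. With that citation inserted, your assembled bound $\alpha(\Gamma_{G\wr H})\leq|H_j|\,|G_x|\,|G|^{n-1}$ matches the stabilizer size from Lemma~\ref{wreath_stabilizer}, and the proof closes exactly as in the paper.
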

\begin{proof}
  For convenience we let $W:=G\wr H$ and $P=G\times\cdots\times
  G$. Note that by the definition of the wreath product, $P$ is in
  fact the internal direct product of $G_1,\dots, G_n$, where
  $G_i\cong G$ and $G_i\leq \sym(\{1,2,\dots, m\} \times \{i\})$, for any $1\leq
  i\leq n$. Hence according to (\ref{gamma_of_internal_prod}), we have
\[
\Gamma_P=\underbrace{\Gamma_G\times \cdots\times \Gamma_G}_{n\,\,\text{times}}.
\]
Consider the lexicographic product $\Gamma=\Gamma_H[\Gamma_P]$. Define the map $f: \Gamma\to \Gamma_W$ by
\[
f(h,(g_1,\dots,g_n))=(g_1,\dots,g_n,h).
\]
We claim that $f$ is a homomorphism.  To prove this, assume
$(h,(g_1,\dots,g_n))$ and $(h',(g'_1,\dots,g'_n))$ are adjacent in
$\Gamma$. We should show that
\begin{equation}\label{f-is-hom}
(g'_1,\dots,g'_n,h')\cdot(g_1,\dots,g_n,h)^{-1}=
(g'_1  g_{h'h^{-1}(1)}^{-1},\dots,g'_n g_{h'h^{-1}(n)}^{-1},h'h^{-1})
\end{equation}
has no fixed point. By the definition of the lexicographic product,
either $h\sim h'$ in $\Gamma_H$ or $h=h'$ and $(g_1,\dots,g_n)\sim
(g'_1,\dots,g'_n)$ in $G$. In the first case, $h'h^{-1}$ has no fixed
point. Thus $(g'_1,\dots,g'_n,h')\cdot(g_1,\dots,g_n,h)^{-1}$ cannot
have a fixed point. In the latter case,
$(g'_1,\dots,g'_n)(g_1,\dots,g_n)^{-1}$ has no fixed point; thus,
according to (\ref{f-is-hom}),
\[
(g'_1,\dots,g'_n,h')\cdot(g_1,\dots,g_n,h)^{-1}= (g'_1  g_1^{-1},\dots,g'_n g_n^{-1},\id_H)
\]
cannot have a fixed point. Thus the claim is proved. 

We  can, therefore, apply the no-homomorphism lemma to get
\[
\frac{|V(\Gamma)|}{\alpha(\Gamma)}\leq \frac{|V(\Gamma_W)|}{\alpha(\Gamma_W)}.
\]
Therefore, using Proposition~\ref{independence-of-lex}, we have
\begin{equation}\label{alphas}
\alpha(\Gamma_W)\leq \alpha(\Gamma_P)\alpha(\Gamma_H).
\end{equation}
But since $G$ has the EKR property, according to
Theorem~\ref{EKR_for_internal_prod}, $P$ has the EKR property; this
means that there is a point $x\in \{1,2,\dots,m\}$ such that
\[
\alpha(\Gamma_P)=|P_x|.
\]
Similarly, since $H$ has the EKR property, there exists a $j\in\{1,2,\dots,n\}$ such that 
\[
\alpha(\Gamma_H)=|H_j|.
\]
This, along with Lemma~\ref{wreath_stabilizer}, implies that the
stabilizer of a point is the largest independent set in $\Gamma_W$.
\end{proof}

In the case of symmetric groups, we can say more.

\begin{prop} 
  The group $\sym(m)\wr \sym(n)$ has the EKR property. Furthermore, if
  $m\geq 4$, then $\sym(m)\wr \sym(n)$ has the strict EKR property.
\end{prop}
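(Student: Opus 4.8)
The first claim is immediate: by Theorem~\ref{EKR_for_sym} both $\sym(m)$ and $\sym(n)$ have the EKR property, so Theorem~\ref{EKR_for_wreath} gives at once that $\sym(m)\wr\sym(n)$ has the EKR property. For the strict statement I would write $W=\sym(m)\wr\sym(n)$ and $P=\sym(m)\times\cdots\times\sym(m)$ ($n$ copies), so that $\Gamma_P=\Gamma_{\sym(m)}\times\cdots\times\Gamma_{\sym(m)}$, and reuse the vertex-bijection $f$ from the proof of Theorem~\ref{EKR_for_wreath}. That proof shows $f$ is a homomorphism from the lexicographic product $\Gamma:=\Gamma_{\sym(n)}[\Gamma_P]$ to $\Gamma_W$, so $\Gamma$ is (via $f$) a spanning subgraph of $\Gamma_W$, although $\Gamma_W$ has strictly more edges. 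Since the point-stabilizer lower bound matches the upper bound in inequality~(\ref{alphas}), I get $\alpha(\Gamma_W)=\alpha(\Gamma_P)\,\alpha(\Gamma_{\sym(n)})=\alpha(\Gamma)$ by Proposition~\ref{independence-of-lex}; hence every maximum independent set $S$ of $\Gamma_W$ is automatically a maximum independent set of the lexicographic product $\Gamma$.

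Using vertex-transitivity of the Cayley graph $\Gamma_W$, I would translate so that $\id\in S$. Proposition~\ref{max_indy_in_lex} applied to $\Gamma=\Gamma_{\sym(n)}[\Gamma_P]$ then says that $T:=\proj_{\sym(n)}(S)$ is a maximum independent set of $\Gamma_{\sym(n)}$ and that, for each $h\in T$, the fibre $S_h=\{g\in P:(g,h)\in S\}$ is a maximum independent set of $\Gamma_P$. Now I invoke the two strict results available. First, $\sym(n)$ has the strict EKR property (Theorem~\ref{EKR_for_sym}), so $T$ is a coset of a point-stabilizer, and since $\id\in T$ this coset is $T=H_j$ for some $j$. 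Second --- and this is exactly where the hypothesis $m\geq 4$ is used --- $P$ is the Young subgroup $\sym([m,\dots,m])$, whose parts are all $\geq 4$, so it avoids the three exceptional families of Theorem~\ref{strict_EKR_for_Young} and therefore has the strict EKR property. Consequently each fibre is a coset of a point-stabilizer of $P$, say $S_h=\{g\in P:g_{i_h}(x_h)=y_h\}$ for a coordinate $i_h$ and points $x_h,y_h$, and $\id\in S_{\id}$ forces $S_{\id}$ to be the stabilizer $P_{(x^{*},i^{*})}$ of a point $(x^{*},i^{*})$.

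The heart of the argument --- and the step I expect to be the main obstacle --- is gluing these fibres into a single point-stabilizer of $W$, which is precisely where the extra edges of $\Gamma_W$ beyond the lexicographic product must be exploited. Unravelling the wreath multiplication, two elements $(g,h),(g',h')$ of $W$ intersect exactly when some coordinate $k$ with $h(k)=h'(k)$ has $g_k$ and $g'_k$ intersecting in $\sym(m)$; equivalently, they fail to intersect iff $g'_k g_k^{-1}$ is a derangement of $\{1,\dots,m\}$ for every $k$ with $h(k)=h'(k)$. Since in each fibre every coordinate except $i_h$ ranges freely over $\sym(m)$, and since two distinct cosets of a point-stabilizer of $\sym(m)$ always contain a non-intersecting pair (a consequence of the EKR property of $\sym(m)$, valid for all $m\geq 2$, because a coset is a maximal intersecting set), I would show that the only way to force $(g,h)$ and $(g',h')$ to intersect for all admissible choices is to have $i_h=i_{h'}=:i^{*}$ for all $h,h'\in T$, with $h(i^{*})=h'(i^{*})$ and with identical cosets at coordinate $i^{*}$. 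The constancy of $h\mapsto h(i^{*})$ on $T=H_j$ forces $i^{*}=j$ (using that $H_j$ is transitive on $\{1,\dots,n\}\setminus\{j\}$ when $n\geq 3$, the cases $n\leq 2$ being immediate), while the identical-coset condition forces every $S_h$ to equal the stabilizer $P_{(x^{*},j)}$. Assembling the fibres then gives $S=\{(g,h):h\in H_j,\ g_j(x^{*})=x^{*}\}$, which by Lemma~\ref{wreath_stabilizer} is exactly the point-stabilizer $W_{(x^{*},j)}$; undoing the translation shows that every maximum independent set is a coset of a point-stabilizer, which is the strict EKR property.
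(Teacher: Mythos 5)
Your proposal is correct, and its skeleton is the same as the paper's: the EKR part via Theorems~\ref{EKR_for_sym} and~\ref{EKR_for_wreath}; then the embedding $f$ of $\Gamma_{\sym(n)}[\Gamma_P]$ as a spanning subgraph of $\Gamma_W$, equality in~(\ref{alphas}), Proposition~\ref{max_indy_in_lex}, and the strict EKR property of the Young subgroup $P=\sym([m,\dots,m])$ from Theorem~\ref{strict_EKR_for_Young} (this is exactly where the paper uses $m\geq 4$ too). Where you genuinely diverge is the gluing step, and your version is the more robust one. The paper picks in each fibre a permutation whose unique fixed point is the stabilized point and asserts that two such permutations with different unique fixed points ``would not be intersecting''; taken literally this is false --- e.g.\ in $\sym(4)$ the cycles $(2\,3\,4)$ and $(1\,3\,4)$ fix only $1$ and only $2$ respectively, yet both map $3$ to $4$ --- so the representatives must be chosen adversarially pair by pair, which is what your argument does. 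Your key micro-lemma (two distinct cosets of point-stabilizers of $\sym(m)$ contain a non-intersecting pair, because otherwise their union would be an intersecting set larger than $(m-1)!$, contradicting the EKR property of $\sym(m)$) delivers exactly this, and it also lets you treat the fibres correctly as \emph{cosets} of point-stabilizers rather than stabilizers, a distinction the paper elides (only the fibre over the identity is forced to be a genuine stabilizer). Finally, you supply a step the paper's closing ``Thus $S$ is the stabilizer of $(y,j)$'' glosses over entirely: pinning down the $\sym(n)$-projection as $H_j$ and matching $i^{*}=j$, which you do via the strict EKR property of $\sym(n)$ together with the transitivity of $H_j$ on $\{1,\dots,n\}\setminus\{j\}$ (with the small cases $n\leq 2$ checked directly); note this could alternatively be extracted from the intersecting condition alone, without invoking strict EKR for $\sym(n)$, by choosing $g,g'\in P_{(x^{*},j)}$ with $g'_jg_j^{-1}$ fixing only $x^{*}$ and all other coordinates deranged, forcing $h(j)=h'(j)$ for all $h,h'\in T$. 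In short: same decomposition and same main lemmas, but your execution of the final assembly is complete where the paper's is terse and, in one spot, literally incorrect.
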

\begin{proof} 
  The first part follows from Theorem~\ref{EKR_for_wreath}. For the
  second part, as in the proof of Theorem~\ref{EKR_for_wreath}, we let
  $W=\sym(m)\wr \sym(n)$ and define the internal direct sum
\[
P=\sym([m]\times\{1\})\times\cdots\times \sym([m]\times\{n\}).
\]
Let $S$ be an intersecting subset of $W$ of maximum size, i.e. $S$ has
the size of a point-stabilizer in $W$. Without loss of generality we
assume that $S$ contains the identity element of $W$. Consider the
homomorphism $f:\Gamma_{\sym(n)}[\Gamma_P]\to \Gamma_W$ defined in the
proof of Theorem~\ref{EKR_for_wreath}. The function $f$ is an
injection; hence there is a copy of $\Gamma_{\sym(n)}[\Gamma_P]$ in
$\Gamma_W$. This implies that $S$ is an independent set in
$\Gamma_{\sym(n)}[\Gamma_P]$ of size
$\alpha(\Gamma_{\sym(n)})\alpha(\Gamma_P)$. 

For $x \in \sym(n)$, let $(\Gamma_P)_x$ denote the copy of $\Gamma_P$
that replaces $x$ from $\Gamma_{\sym(n)}$ in the graph
$\Gamma_{\sym(n)}[\Gamma_P]$. Then, according to
Proposition~\ref{max_indy_in_lex} and the fact that $P$ has the strict
EKR property (see Theorem~\ref{strict_EKR_for_Young}), we have that
the projection of $S$ to $(\Gamma_P)_x$ is a point-stabilizer in $P$;
denote this point stabilizer by $P_{(y_x,j_x)}$ where $y_x \in \{1,
\dots, m\}$ and $j_x \in \{1, \dots, n\}$.

Since $m\geq 4$ there is a permutation in $P_{(y_{x},j_{x})}$ that
fixes only the point $(y_{x}, j_{x})$.  This implies that if
$(y_{x_1}, j_{x_1})$ and $(y_{x_2}, j_{x_2})$ are different for two
vertices $x_1,x_2$ in the projection of $S$ to $\Gamma_{\sym(n)}$,
then the permutation in $P_{(y_{x_1},j_{x_1})}$ that fixes only the
point $(y_{x_1}, j_{x_1})$ and the permutation in
$P_{(y_{x_2},j_{x_2})}$ that fixes only the point $(y_{x_2}, j_{x_2})$
would not be intersecting.  Since $S$ is intersecting, the projection
of $S$ to each $(\Gamma_P)_x$ is the stabilizer of the point
$(y_{x},j_{x})$, which we will simply denote at $(y,j)$. Thus $S$ is
the stabilizer of $(y,j)$ in $\sym(m) \wr \sym(n)$.
\end{proof}

\section{Groups that do not have the EKR property}
\label{non-examples}

Our first example of a group that does not have the EKR property is
the Mathieu group $M_{20}$ (this group can also be thought of as the
stabilizer of a point in the Mathieu group $M_{21}$). The stabilizer
of a point in $M_{20}$ has size $48$ but there is an independent set
in $\Gamma_{M_{20}}$ of size $64$.

The group $M_{20}$ acts on the set $\{1, 2 \dots, 20\}$ and, under
this action, there is a system of imprimitivity that is comprised of
$5$ blocks, each of size $4$. Label these blocks by $B_i$. Consider
the following four possible ways a permutation can move these blocks:
\[
B_1 \rightarrow B_1, \quad B_2 \rightarrow B_2, \quad B_3 \rightarrow B_3, \quad B_4 \rightarrow B_5
\]
(three of the blocks are fixed and one is not).  Then the set of all
permutations that move the blocks in at least three of these four ways
forms an independent set of size $64$. There are many other sets of four
ways to move that blocks that will similarly produce independent sets
of size $64$. This shows that $M_{20}$ does not have the EKR
property. We conjecture that these are the largest independent sets in
$\Gamma_{M_{20}}$, and we leave it as an open problem whether or not
there are other ways to construct maximum independent sets in this graph.

The original EKR theorem for sets considered collections of sets in
which any two sets have at least $t$ elements in common for some
integer $t\geq 1$. Similarly, two permutations $\pi$ and $\sigma$ are
$t$-intersecting if $\pi\sigma^{-1}$ has at least $t$ fixed points. We
can then ask what is the size of the largest $t$-intersecting set of
permutations from a group? 

The largest set of $t$-intersecting permutations of a group $G \leq
\sym(n)$ is also the largest set of intersecting permutations in $G$
when we consider a different group action. Namely, if $G \leq
\sym(n)$, then $G$ has a natural action on the ordered $t$-sets from
$\{1,\dots,n\}$; so $G$ can also be considered as a subgroup of
$\sym(n \cdot (n-1) \cdot \cdots \cdot(n-t+1))$ with this action. Two
permutations are intersecting under this action if and only if they
are $t$-intersecting under the usual action on $\{1,\dots,n\}$.
Following the construction of $t$-intersecting sets of maximum size,
we can produce groups that do not have the EKR property.

For example, consider $4$-intersecting permutations in the group
$\sym(8)$ with the natural action on $\{1,2,\dots,8\}$. The stabilizer of $4$
points has size $4!=24$.  But the set of all permutations from
$\sym(8)$ that fix at least $5$ points from the set $\{1,2,3,4,5,6\}$
is also $4$-intersecting; but this set has size $26$. If we consider
$\sym(8)$ with its action on the ordered sets of size $4$ from $[8]$
then we have a subgroup of $\sym(1680)$ isomorphic to $\sym(8)$ that
does not have the EKR property.

This can be generalized for any $t$ to give more examples of groups
that do not have the EKR property.
\begin{lem}
  For $t\geq 4$ the size of a $t$-intersecting set in $\sym(2t)$ has
  size at least 
\[
(t^2+t-1)(t-2)!
\]
\end{lem}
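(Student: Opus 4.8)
The plan is to prove the bound the same way the preceding $t=4$ discussion does: exhibit one explicit $t$-intersecting family $\mathcal{F}\subseteq \sym(2t)$ and count it. Since the statement only asserts a lower bound, no optimality or upper-bound argument is needed — a single family with an honest count of its cardinality suffices. The template I would use is to single out a small ``core'' set $A\subseteq\{1,\dots,2t\}$ and impose a condition of the form ``$\pi$ fixes at least $f$ of the points of $A$'', leaving the other $2t-|A|$ points comparatively free. The verification that such a family is $t$-intersecting is the easy, conceptual step: if $\pi$ and $\sigma$ each fix at least $f$ points of a set $A$ with $|A|=a$, then they fix at least $2f-a$ common points of $A$, and at each such $i$ we have $\pi(i)=i=\sigma(i)$, so $\pi$ and $\sigma$ agree there. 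One arranges that $2f-a\geq t$ (in the displayed $t=4$ example, $a=6$, $f=5$, $2f-a=4=t$), and then every pair of members agrees on at least $t$ points.

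The cardinality is then obtained by summing over the number $j\geq f$ of fixed points in $A$: the permutations fixing exactly $j$ of the $a$ core points number $\binom{a}{j}$ times the number of ways to permute the remaining $a-j$ core points together with the free points so that none of those $a-j$ core points is fixed, which is a signed sum of factorials via inclusion–exclusion. The dominant term is a free arrangement of a block of size $t-2$, producing the global factor $(t-2)!$, while the choices on the core produce a polynomial factor; collecting terms gives a bound of the shape (quadratic in $t$)$\,\times(t-2)!$. One finally records that $(t^2+t-1)(t-2)!=(t+2)(t-1)!+(t-2)!$, which is the form the inclusion–exclusion bookkeeping should assemble into, and checks that the listed permutations are genuinely distinct so that nothing is overcounted.

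The main obstacle is the choice of family, not the verification. The naive generalization of the $t=4$ construction — ``$\pi$ fixes at least $t+1$ of a $(t+2)$-set'' — is indeed $t$-intersecting, but a direct count gives only $(t^2-3)(t-2)!$, which is strictly smaller than the claimed $(t^2+t-1)(t-2)!$ (for $t=4$ this is $26$ rather than $38$); moreover that family is already maximal, so one cannot simply enlarge it. The real work is therefore to design a slightly richer family that is still provably $t$-intersecting yet whose enumeration carries the larger polynomial factor $t^2+t-1$ — for instance by allowing a controlled second moved point, or by gluing several such cores along a common agreement set so that the surviving freedom multiplies $(t-2)!$ by the full $(t-1)!$ rather than by $(t-1)!-(t-2)!$. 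I expect selecting this family and handling the cross terms in its count (so that they total exactly $(t^2+t-1)(t-2)!$) to be the crux; by contrast, checking the $t$-intersecting property from the common–fixed–point inequality $2f-a\geq t$, and comparing the final quantity with the point-stabiliser size $t!=(t^2-t)(t-2)!$, are routine.
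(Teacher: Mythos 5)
Your arithmetic is correct, and it exposes the real situation: the paper's own proof is exactly the ``naive'' construction you computed and then set aside. The paper takes the family of all permutations fixing at least $t+1$ points of $\{1,2,\dots,t+2\}$ and asserts its size is
\[
\binom{t+2}{t+2}(t-2)!+\binom{t+2}{t+1}(t-1)(t-2)!=(t^2+t-1)(t-2)!,
\]
but this count is an overcount: the second term lets the one unchosen core point be permuted freely among the remaining $t-1$ points, so every permutation fixing all of $\{1,2,\dots,t+2\}$ is counted once in the first term and $t+2$ more times in the second, inflating the total by $(t+2)(t-2)!$. The honest size is, as you found,
\[
(t-2)!+(t+2)\bigl((t-1)!-(t-2)!\bigr)=(t^2-3)(t-2)!,
\]
and this matches the paper's own worked example for $t=4$, which reports $26$ rather than $38$. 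So there is no richer family hiding behind the lemma for you to discover: the constant in the printed statement is simply erroneous, and the step you rightly identified as the crux --- constructing a $t$-intersecting family of size $(t^2+t-1)(t-2)!$ --- is not carried out in the paper either. Your proposal therefore cannot be completed as a proof of the statement as printed, but the obstruction is in the statement, not in your method.

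The productive resolution is to prove the corrected statement with constant $t^2-3$, using precisely your two ingredients: the $t$-intersection check via the inequality $2f-a\geq t$ (here $a=t+2$, $f=t+1$), and the inclusion--exclusion count above. This still yields the paper's intended application, since $(t^2-3)(t-2)!>(t^2-t)(t-2)!=t!$ for all $t\geq 4$, so these groups (viewed, as in the paper, as subgroups of a larger symmetric group via the action on ordered $t$-tuples) fail the EKR property; note the paper's closing remark ``for all $t>0$'' must also be weakened accordingly. Your instinct that the family might be improved is not wrong in general --- for instance, for $t=4$ the identity together with all $\binom{8}{2}=28$ transpositions of $\sym(8)$ is $4$-intersecting of size $29>26$ (this is the family ``fix at least $t+i$ of $t+2i$ points'' with $i=t/2$) --- but such variants still fall well short of $(t^2+t-1)(t-2)!$, and no construction attaining the printed constant is known from the paper.
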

\begin{proof} The set of all permutations that fix at least $t+1$ points from
$\{1,2,\dots, t+2\}$ is a $t$-intersecting set of size
\[
\binom{t+2}{t+2} (t-2)!  + \binom{t+2}{t+1} (t-1)(t-2)!\qed
\]
Note for all $t>0$ that $(t^2+t-1)(t-2)! \geq t!$ so these
are all examples of groups that do not have the EKR property.
\end{proof}

Finally, we note that it is possible to take the internal direct
product of a group that does not have the EKR property with one that
does and have the result be a group that does have the EKR
property. To see this, let $G_1 = M_{20}$ and $G_2=\sym(n)$ and let
$G$ be the internal direct product of $G_1$ and $G_2$. Then for $n$
sufficiently large we have that
\[
\alpha(\Gamma_G)=\max\{\alpha(\Gamma_{G_1})|G_2| , \alpha(\Gamma_{G_2})|G_1| \}=\alpha(\Gamma_{G_1})|(G_2)_{n}|=(n-1)!|G_1|,
\]
which is the size of a stabilizer of a point in $G$. This shows that
$G = G_1 \times G_2$ does have the EKR property while $G_1$ doesn't.

\section{Further Work}\label{future}

There is clearly much more work to be done to determine which groups
that have the EKR property. We would like to find more conditions on
groups to determine if they have either the EKR property or the strict
EKR property.

We would also like to investigate groups that do not have the either
the EKR property or the strict EKR property. In this paper we show that the
Frobenius groups do not have the strict EKR property, but this example
is not the most interesting since the derangement graph is the union
of complete of graphs. The Mathieu group $M_{20}$ is a more
intersecting example since it has non-trivial intersecting sets and
these set have an interesting structure. The next step is to find more
examples of groups that do not have the EKR property. The hope would
be to determine properties of the group that predicts when this
happens and to determine if the maximum independent sets have an
interesting structure.  For example, the groups $\PGL(3,q)$ never have
the strict EKR property since the stabilizer of a
hyperplane forms an intersecting set that has the same size as the
stabilizer of a point (in~\cite{MeagherS13} it is shown that these are
all the intersecting sets of maximum size).

Another direction to consider is based on the EKR theorem for
sets. The EKR theorem for $t$-intersecting $k$-subsets from $\{1,2,\dots,n\}$ has
been completely solved for all values of $n,k$ and
$t$~\cite{MR1429238}. Depending on the size of $n$, relative to
$k$ and $t$, the largest $t$-intersecting $k$-subsets are all the
subsets that contain $t+i$ elements from a set of $t+2i$ elements
(where $i$ is an integer that depends on $n,k$ and $t$).  Similarly,
for permutations, we can define the set of all permutations that fix
at least $t+i$ elements from a set of $t+2i$ elements. These sets are
natural candidates for intersecting sets of maximum size for groups in
which the EKR theorem does not hold. We plan to check these sets for
some small groups.

\newpage

\end{document}